\newtheorem{theorem}{Theorem}[section]
\theoremstyle{definition}
\newtheorem{definition}[theorem]{Definition}
\newtheorem{example}[theorem]{Example}
\newtheorem{corollary}[theorem]{Corollary}
\newtheorem{lem}[theorem]{Lemma}
\theoremstyle{remark}
\numberwithin{equation}{section}
\begin{document}

\title{  (weakly) square-difference factor absorbing   hyperideals }

\author{Mahdi Anbarloei}
\address{Department of Mathematics, Faculty of Sciences,
Imam Khomeini International University, Qazvin, Iran.
}

\email{m.anbarloei@sci.ikiu.ac.ir}


\subjclass[2020]{  20N20, 16Y20  }


\keywords{  sdf-absorbing hyperideals, weakly sdf-absorbing hyperideals.}

\begin{abstract}
 In this paper, we introduce (weakly) square-difference factor absorbing   hyperideals in a multiplicative hyperring. 

\end{abstract}
\maketitle

\section{Introduction}

Marty's pioneering work on hyperstructures generalized many concepts in modern algebra by extending their underlying structures to hyperstructures \cite{marty}. This new algebraic framework has since been extensively studied and developed by numerous scholars \cite{f1,f2,f3,f4,f5,f7,f8,f9}.  A hyperoperation $``\circ" $ on non-empty set $Y$ is a mapping from $Y \times Y$ into $P^*(Y)$ such that $P^*(Y)$ is the family of all non-empty subsets of $Y$. In this case,  $(Y,\circ)$ is called hypergroupoid.  Let $Y_1,Y_2$ be two subsets of $Y$ and $y \in Y$, then $Y_1 \circ X_2 =\cup_{y_1 \in Y_1, y_2 \in Y_2}y_1 \circ y_2,$ and $ Y_1 \circ y=Y_1 \circ \{y\}.$ This means that the hyperoperation $``\circ" $ on $Y$ can be extended to  subsets of $Y$. A hypergroupoid $(Y, \circ)$ is called  a semihypergroup if $\cup_{a \in y \circ z}x \circ a=\cup_{b \in x \circ y} b \circ z $ for all $x,y,z \in Y$ which means $\circ$ is associative. A semihypergroup $Y$ is called a hypergroup if  $y \circ Y=Y=Y\circ y$ for each  $y \in Y$ \cite{f10}.  Multiplicative hyperrings as an important class of algebraic hyperstructures that generalize classical rings  were introduced by Rota in 1982 \cite{f14}. This class of hyperstructurs  has been widely reviewed in \cite{ameri5, ameri6,  anb4,   Kamali, Ghiasvand, Ghiasvand2, f16, ul}. Recall from \cite{f10} that an algebraic structure $(H,+,\circ)$ is a  commutative multiplicative hyperring if the following
holds: {\bf 1.} $(H,+)$ is an abelian  group,  {\bf 2.} $(H,\circ)$ is a semihypergroup; 
{\bf 3.} $x\circ (y+z) \subseteq x\circ y+x\circ z$ and $(y+z)\circ x \subseteq y\circ x+z\circ x$ for each $x, y, z \in H, $
{\bf 4.}  $x\circ (-y) = -(x\circ y)=(-x)\circ y$ for each $x, y \in H$,
{\bf 5.} $x \circ y =y \circ x$ for each $x,y \in H$.  The multiplicative hyperring $H$ is called strongly distributive if in (3)  the equality holds. 
Assume that $(\mathbb{Z},+,\cdot)$ is the ring of integers. For each subset $\Omega \in P^\star(\mathbb{Z})$ with $\vert \Omega\vert \geq 2$, there exists a multiplicative hyperring $(\mathbb{Z}_\Omega,+,\circ)$ where $\mathbb{Z}_\Omega=\mathbb{Z}$ and  $x \circ y =\{x.\gamma.y\ \vert \ \gamma \in \Omega\}$ for every $x,y\in \mathbb{Z}_\Omega$ \cite{das}.
An element $e \in H$ is considered as an identity element if $x \in x\circ e$ for every $x \in H$. An element $x \in H$ is called unit, if there exists $y \in H$ such that $e \in x \circ y$.  Denote the set of all unit elements in $H$ by $U(H)$  \cite{ameri}. Also, a multiplicative hyperring $H$ refers to a hyperfield if each non-zero element of $H$ is unit. A non-empty subset $A$ of  $H$ is a  hyperideal  if {\bf i.}  $x - y \in A$ for every $x, y \in A$,  {\bf ii.} $r \circ x \subseteq A$ for every $x \in A$ and $r \in H$ \cite{f10}.
 A proper hyperideal $A$ in  $H$ is a prime hyperideal if $x \circ y \subseteq A$ for $x,y \in H$ implies either  $x \in A$ or $y \in A$ \cite{das}. For any given hyperideal $A$ of $H$, $rad(A)$ denotes the intersection of all prime hyperideals of $H$ containing  $A$. If the multiplicative hyperring $H$  has no prime hyperideal containing $A$, we assume $rad(A)=H$. Suppose that $\mathcal{C}$ is the class of all finite products of elements of $H$ that is $\mathcal{C} = \{c_1 \circ c_2 \circ \cdots \circ c_n \ \vert \ c_i \in H, n \in \mathbb{N}\} \subseteq P^{\ast }(H)$ and  $A$ is a hyperideal of $H$. $A$ refers to a $\mathcal{C}$-hyperideal of $H$ if  $ A \cap C \neq \varnothing $ for all $C \in \mathcal{C}$ implies $C \subseteq A$.  Suppose that the hyperideal 
 $\{x \in H \ \vert \  x^n \subseteq A \ \text{for some} \ n \in \mathbb{N}\} $ is designated by $D(A)$. Note that the inclusion $D(A) \subseteq rad(A)$ always holds but other inclusion holds if $A$ is a $\mathcal{C}$-hyperideal of $H$ by Proposition 3.2 in \cite{das}. On the other hand, a hyperideal $A$ of $H$ is  a strong $\mathcal{C}$-hyperideal if $D \cap A \neq \varnothing$ for each $D \in \mathfrak{C}$  implies  $D \subseteq A$ such that $\mathfrak{C}=\{\sum_{i=1}^n C_i \ \vert \ C_i \in \mathcal{C}, n \in \mathbb{N}\}$ and $\mathcal{C} = \{c_1 \circ c_2 \circ \cdots \circ c_n \ \vert \ c_i \in H, n \in \mathbb{N}\}$ (for more details see \cite{phd}). 
  A proper hyperideal $A$ of  $H$ is maximal in $H$ if $A \subset B \subseteq H$ for
each hyperideal $B$ of $H$ implies  $B = A$ \cite{ameri}. The intersection of all maximal hyperideals of $H$ is denoted by $J(H)$. Also, $H$ refers to a local multiplicative hyperring if it has just one maximal hyperideal.   Assume that  $A_1$ and $A_2$ are hyperideals of $H$. We define $(A_2:A_1)=\{x \in H \ \vert \ x \circ A_1 \subseteq A_2\}$ \cite{ameri}.
A hyperring $H$ said to be of characteristic $\alpha$, if $\alpha$ is the smallest positive integer such that $\alpha x=0$ for all $x \in H$. If no such of $\alpha$  exists, then we say that $H$ is of characteristic $0$ \cite{Kamali}. The element $x \in H$ is nilpotent, if there exists an $n \in \mathbb{N}$ such that $0 \in x^n$. The set of all nilpotent elements of $H$ will be designated by $\Upsilon$ \cite{ameri}.

The concept of 2-absorbing hyperideals was studied in \cite{anb5}. In this paper, we aim to introduce and study the notions of  square-difference factor absorbing   hyperideals (or sdf-absorbing hyperideals) and weakly square-difference factor absorbing   hyperideals (or weakly sdf-absorbing hyperideals) in a commutative multiplicative hyperring.  Among many results in this paper, we conclude that for every  nonzero sdf-absorbing $\mathcal{C}$-hyperideal $P$ of $H$,  $rad(P)=P$ in Theorem \ref{1}. It is shown that  the converse of the explanation holds if the multiplicative hyperring $H$ is of characteristic 2 and  $P$ is a strong $\mathcal{C}$-hyperideal  in Theorem \ref{2}. We obtain that if every nonzero proper hyperideal of $H$ is an sdf-absorbing $\mathcal{C}$-hyperideal, then $H/\Upsilon$ is regular in Theorem \ref{7}. Although every nonzero prime strong $\mathcal{C}$-hyperideal is sdf-absorbing, Theorem \ref{12} shows the converse of the explanation holds if $1+1 \in U(H)$. We present some characterizations of these notions  on cartesian product of multiplicative hyperrings. 
Moreover, in Theorem \ref{01}, we show that if $P$ is a weakly sdf-absorbing strong $\mathcal{C}$-hyperideal of $H$ but is not an sdf-absorbing hyperideal, then $P \subseteq \Upsilon$.

Throughout this paper, $A$  denotes a commutative multiplicative hyperring with identity element $1$.

\section{  $sdf$-absorbing   hyperideals }
\begin{definition}
Let $P$ be a proper hyperideal of $H$. $P$ refers to a square-difference factor absorbing   hyperideal (or  sdf-absorbing hyperideal for short) if $0 \neq x, y \in H$ and $x^2 - y^2 \subseteq P$ imply $x-y  \in P$ or $x+y \in P$. 
\end{definition}
\begin{example}
Consider the multiplicative hyperring $(\mathbb{Z}_4,+,\circ)$, where the operation $+$ and the hyperoperation $\circ$ deﬁned by
\[ 
\begin{tabular}{|c|c|c|c|c|} 
\hline $+$ & $0$ & $1$ & $2$ & $3$
\\ \hline $0$ & $0$ & $1$ & $2$ & $3$
\\ \hline$1$ & $1$ & $2$ & $3$ & $0$
\\ \hline $2$ & $2$ & $3$ & $0$ & $1$
\\ \hline$3$ & $3$ & $0$ & $1$&$2$
\\ \hline
\end{tabular}\ \ \ \ \ \ \ \ \
\begin{tabular}{|c|c|c|c|c|} 
\hline $\circ$ & $0$ & $1$ & $2$ & $3$
\\ \hline $0$ & $\{0\}$ & $\{0\}$ & $\{0\}$ & $\{0\}$ 
\\ \hline $1$ & $\{0\}$ & $\mathbb{Z}_4$ & $\{0,2\}$ & $\mathbb{Z}_4$ 
\\ \hline$2$ & $\{0\}$ & $\{0,2\}$ & $\{0\}$ &$\{0,2\}$ 
\\ \hline $3$ & $\{0\}$ & $\mathbb{Z}_4$ & $\{0,2\}$ & $\mathbb{Z}_4$ 
\\ \hline
\end{tabular}\]
In this hyperring, $P=\{0,2\}$ is an sdf-absorbing hyperideal. 
\end{example}

\begin{theorem} \label{1}
If $P$ is a nonzero sdf-absorbing $\mathcal{C}$-hyperideal of $H$, then $rad(P)=P$.
\end{theorem}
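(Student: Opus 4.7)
The plan is to prove the nontrivial inclusion $rad(P) \subseteq P$. Because $P$ is a $\mathcal{C}$-hyperideal, the cited Proposition~3.2 of \cite{das} upgrades the general inclusion $D(P) \subseteq rad(P)$ to an equality, so it suffices to show that $x^n \subseteq P$ for some $n \in \mathbb{N}$ already forces $x \in P$. This reframes the statement as a ``nil implies in'' property about the hyperideal.

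To that end, I would fix such an $x$, dispose of the trivial case $x = 0$, and pick the smallest positive integer $n$ with $x^n \subseteq P$. If $n = 1$ there is nothing to show, so assume $n \geq 2$; I will contradict the minimality of $n$ by proving $x^{n-1} \subseteq P$. Pick any $w \in x^{n-1}$. If $w = 0$ it lies in $P$; otherwise, I invoke the hypothesis $P \neq \{0\}$ to choose some nonzero $v \in P$. Since $P$ is a hyperideal, $v^2 = v \circ v \subseteq P$, and $w^2 \subseteq x^{n-1} \circ x^{n-1} = x^{2n-2} \subseteq P$, where the last inclusion uses $2n-2 \geq n$ together with the ideal property applied to $x^n \subseteq P$. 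Hence $w^2 - v^2 \subseteq P$, and the sdf-absorbing property applied to the nonzero pair $(w, v)$ delivers $w - v \in P$ or $w + v \in P$; either way $w \in v + P \subseteq P$. This yields $x^{n-1} \subseteq P$, contradicting minimality, so $n = 1$ and $x \in P$.

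The main subtlety I expect is keeping the interaction between elements and sets straight: one must remember that $w^2$ and $x^{n-1}$ are subsets of $H$ whereas the sdf-absorbing hypothesis is stated for \emph{elements}, so passing to a representative $w$ of $x^{n-1}$ is the crucial bridge. Everything else --- the containment $w^2 - v^2 \subseteq P$, the associativity used to rewrite $x^{n-1} \circ x^{n-1}$ as $x^{2n-2}$, and the ideal absorption that places $x^{2n-2}$ inside $P$ --- is routine given that $(P,+)$ is a subgroup of $(H,+)$. The hypothesis $P \neq \{0\}$ is consumed in exactly one place, namely the selection of the partner $v$; without it there would be no way to feed the sdf-absorbing definition with two nonzero inputs.
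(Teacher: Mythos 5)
Your proof is correct and follows essentially the same route as the paper's: both reduce $rad(P)\subseteq P$ to the statement that $x^n\subseteq P$ forces $x\in P$ via the $\mathcal{C}$-hyperideal equality $rad(P)=D(P)$, and both obtain the key step by pairing an element $w$ with $w^2\subseteq P$ against a nonzero $v\in P$ and applying the sdf-absorbing hypothesis to $w^2-v^2\subseteq P$. The only differences are cosmetic: you descend on the exponent by one using minimality where the paper halves the (even) exponent, and you explicitly dispose of the case $w=0$, a detail the paper's argument glosses over.
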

\begin{proof}
Let $t \in rad(P)$. Then there exists $n \in \mathbb{N}$ such that $t^n \subseteq P$. Here we consider two cases. First, we assume that $n$ is an even number. So we have $t^{\frac{n}{2}} \circ t^{\frac{n}{2}} \subseteq P$. Take any $x \in t^{\frac{n}{2}}$. Therefore we have $x^2  \subseteq P$. We choose $0 \neq y \in P$ as  $P$ is a nonzero hyperideal of $H$. Hence we have $x^2 - y^2 \subseteq P$ which implies $x-y \in P$ or $x+y \in P$ as $P$ is an sdf-absorbing hyperideal of $H$. So we get $x \in P$. Since $P$ is a $\mathcal{C}$-hyperideal of $H$, we conclude that $t^{\frac{n}{2}}\subseteq P$. By continuing the process, we obtain $t \in P$. Now, we assume that $n$ is an odd number. From $t^n \subseteq P$, it follows that $t^{n+1} \subseteq P$. By a similar argument, we get $t \in P$ and so $rad(P) \subseteq P$. Since the inclusion $P \subseteq rad(P)$ always holds, we conclude that $rad(P)=P$. 
\end{proof}
\begin{theorem} \label{2}
Let $H$ be of  characteristic $2$ and $P$ be a proper strong $\mathcal{C}$-hyperideal of $H$. If $rad(P)=P$, then $P$ is an  sdf-absorbing hyperideal of $H$.
\end{theorem}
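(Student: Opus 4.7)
The plan is to show that any $0 \neq x, y \in H$ with $x^2 - y^2 \subseteq P$ forces $x + y \in P$. Since $H$ has characteristic $2$, $-y = y$, so $x - y = x + y$ and the two alternatives in the sdf-absorbing definition coincide; also $x^2 - y^2 = x^2 + y^2$, so the hypothesis becomes $x^2 + y^2 \subseteq P$. The proof then has two stages: first promote $x^2 + y^2 \subseteq P$ to $(x+y)^2 \subseteq P$ using the strong $\mathcal{C}$-hyperideal property, then deduce $x + y \in P$ from $rad(P) = P$.

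For the first stage, I begin with the subdistributivity inclusion
\[
(x+y) \circ (x+y) \subseteq x^2 + y^2 + (x \circ y) + (x \circ y),
\]
and denote the right-hand Minkowski sum by $D$. Since $D$ is a sum of four members of $\mathcal{C}$, it lies in $\mathfrak{C}$. The key observation is that $D$ also contains $x^2 + y^2$: picking any $u \in x \circ y$ (nonempty since $x \circ y$ is the value of a hyperoperation) and using $u + u = 2u = 0$ in characteristic $2$, we get $a + b = a + b + u + u \in D$ for every $a \in x^2$ and $b \in y^2$. Thus $D \cap P \supseteq x^2 + y^2 \neq \varnothing$. Because $P$ is a strong $\mathcal{C}$-hyperideal, this forces $D \subseteq P$, and in particular $(x+y)^2 \subseteq P$.

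For the second stage, a strong $\mathcal{C}$-hyperideal is in particular a $\mathcal{C}$-hyperideal, so by the remark recalled in the introduction one has $D(P) = rad(P)$; combined with $rad(P) = P$, this yields $D(P) = P$. Since $(x+y)^2 \subseteq P$ is by definition the statement $x + y \in D(P)$, we conclude $x + y \in P$, completing the verification of the sdf-absorbing condition.

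The main obstacle is the first stage. Subdistributivity gives only an inclusion, and the extra cross term $(x \circ y) + (x \circ y)$ does not collapse to $\{0\}$ in a general multiplicative hyperring even in characteristic $2$, so the classical ring identity $(x+y)^2 = x^2 + y^2$ is unavailable. The role of characteristic $2$ is therefore subtler than merely identifying $x - y$ with $x + y$: the identity $u + u = 0$ is exactly what secures the inclusion $x^2 + y^2 \subseteq D$, which in turn supplies the nonempty intersection $D \cap P$ needed to trigger the strong $\mathcal{C}$-hyperideal property and pull $(x+y)^2$ into $P$.
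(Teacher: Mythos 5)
Your argument is correct and is essentially the paper's own proof: the paper likewise uses characteristic $2$ to see that $t+s+2r=t+s=t-s\in x^2-y^2\subseteq P$ for $t\in x^2$, $s\in y^2$, $r\in x\circ y$, invokes the strong $\mathcal{C}$-hyperideal property to pull the whole member $x^2+y^2+2(x\circ y)\supseteq (x+y)^2$ of $\mathfrak{C}$ into $P$, and then concludes $x+y\in rad(P)=P$. Your write-up is, if anything, slightly more careful in treating the cross term as the Minkowski sum $(x\circ y)+(x\circ y)$ rather than the ambiguous $2(x\circ y)$, but the idea is identical.
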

\begin{proof}
Assume that $x^2 - y^2 \subseteq P$ for $0 \neq x,y \in H$. Let $t \in x^2$, $s \in y^2$ and $r \in x \circ y$.  Since $H$ is of  characteristic $2$,  we have $t+s=t-s \in x^2 -y^2 \subseteq P$ and so $t+s+2r \in P$. Since $t+s+2r \in x^2+y^2+2 (x \circ y)$ and $P$ is a strong $\mathcal{C}$-hyperideal of $H$, we obtain $x^2+y^2+2 (x \circ y ) \subseteq P$ and so $(x+y)^2 \subseteq P$. Since $rad(P)=P$, we get $x+y \in P$. This shows that $P$ is an  sdf-absorbing hyperideal of $H$.
\end{proof}
\begin{theorem} \label{3}
Let $P$ be an  sdf-absorbing strong $\mathcal{C}$-hyperideal of $H$.  Then the followings are equivalent: 
\begin{itemize} 
\item[\rm(i)]~ $x^2-y^2 \subseteq P$ for $0 \neq x,y \in H$ implies $x-y \in P$ and $x+y \in P$.
\item[\rm(ii)]~ $1+1 \in P$.
\item[\rm(iii)]~ the hyperring $H/P$ is of characteristic $2$.
\end{itemize} 
\end{theorem}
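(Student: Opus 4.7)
The plan is to prove the cycle $(i) \Rightarrow (ii) \Rightarrow (iii) \Rightarrow (i)$. The implications $(ii) \Leftrightarrow (iii)$ carry most of the hyperring-theoretic content, while $(i) \Rightarrow (ii)$ and $(iii) \Rightarrow (i)$ are where the sdf-absorbing hypothesis is actually used.

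For $(i) \Rightarrow (ii)$, I would apply the hypothesis to the specific choice $x = y = 1$. The nontrivial preliminary step is to verify $1 \circ 1 - 1 \circ 1 \subseteq P$. Since $-(1 \circ 1) = (-1) \circ 1 \in \mathcal{C}$, the set $1 \circ 1 - 1 \circ 1$ is the sum of two members of $\mathcal{C}$, hence lies in $\mathfrak{C}$. The element $0$ belongs to both $1 \circ 1 - 1 \circ 1$ and to $P$, so the intersection is nonempty and the strong $\mathcal{C}$-hyperideal property forces the full containment. Condition $(i)$ then yields $x + y = 1 + 1 \in P$.

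For $(ii) \Rightarrow (iii)$, the goal is to show $x + x \in P$ for every $x \in H$; since $P$ is proper, characteristic $1$ in $H/P$ is ruled out and this forces characteristic $2$. From $1 + 1 \in P$ and the hyperideal property we have $(1 + 1) \circ x \subseteq P$. The inclusive distributivity gives $(1 + 1) \circ x \subseteq 1 \circ x + 1 \circ x$, and the latter set lies in $\mathfrak{C}$ as a sum of two products. Any element of the nonempty set $(1 + 1) \circ x$ witnesses $(1 \circ x + 1 \circ x) \cap P \neq \varnothing$, so the strong $\mathcal{C}$-hyperideal property upgrades this to $1 \circ x + 1 \circ x \subseteq P$, and since $x \in 1 \circ x$, this places $x + x$ inside $P$.

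Finally, for $(iii) \Rightarrow (i)$, characteristic $2$ in $H/P$ means $2y \in P$ for every $y \in H$. Given $x^2 - y^2 \subseteq P$ with $0 \neq x, y$, the sdf-absorbing hypothesis yields $x - y \in P$ or $x + y \in P$; since $(x + y) - (x - y) = 2y \in P$ and hyperideals are additive subgroups of $H$, either membership upgrades to both. The main obstacle throughout is the merely inclusive distributivity of $\circ$ over $+$: the temptingly simple identity $x + x = (1 + 1) \circ x$ fails in general, and the strong $\mathcal{C}$-hyperideal hypothesis is precisely what lets us bridge from the smaller set $(1 + 1) \circ x \subseteq P$ to the larger set $1 \circ x + 1 \circ x$ that contains $x + x$.
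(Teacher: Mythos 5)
Your proof is correct and follows essentially the same route as the paper's: the choice $x=y=1$ for (i)$\Rightarrow$(ii), and the strong $\mathcal{C}$-hyperideal bridge from $(1+1)\circ y\subseteq P$ to $2y\in 1\circ y+1\circ y\subseteq P$ so that membership of one of $x\pm y$ upgrades to both. Your cycle organization is in fact slightly more careful than the paper, which labels (ii)$\Leftrightarrow$(iii) ``obvious'' even though the direction (ii)$\Rightarrow$(iii) genuinely needs the strong $\mathcal{C}$-hyperideal argument you spell out.
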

\begin{proof}
(i)$  \Longrightarrow $(ii) Put $x=y=1$. Since $P$ is a strong $\mathcal{C}$-hyperideal of $H$ and $0 \in x^2-y^2$, we get $x^2-y^2 \subseteq P$. By the hypothesis, we conclude that $1+1=x+y \in P$, as needed.

(ii)$ \Longrightarrow$ (i) Let $x^2-y^2 \subseteq P$ for $0 \neq x,y \in H$. Since $P$ is an  sdf-absorbing hyperideal of $H$, we have $x-y \in P$ or $x+y \in P$. Since $P$ be a  strong $\mathcal{C}$-hyperideal of $H$, $(1+1) \circ x \subseteq (1 \circ x +1 \circ x) \cap P$ and $(1+1) \circ y \subseteq (1 \circ y +1 \circ y) \cap P$, we have $2x \in 1 \circ x+1 \circ x \subseteq P$ and $2y \in 1 \circ y+1 \circ y \subseteq P$ .  From $x-y \in P$, it follows that $x+y=x-y+2y \in P$.  Also, From $x+y \in P$, it follows that $x-y=x+y-2y \in P$.

(ii)$ \Longleftrightarrow$ (iii)  It is obvious.
\end{proof}
\begin{theorem} \label{4}
Assume that $P$ is a proper  strong $\mathcal{C}$-hyperideal of $H$. Then $P$ is an sdf-absorbing hyperideal of $H$ if and only if $x \circ y \subseteq P$ for $0 \neq x,y \in H \backslash P$ implies there are no $0 \neq a,b \in H$ that satisfy both equations  $A-B=x$ and $A+B=y$. 
\end{theorem}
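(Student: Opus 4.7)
The plan is to reduce both directions of the equivalence to a single algebraic lemma: under the strong $\mathcal{C}$-hyperideal hypothesis, for any fixed $a,b\in H$ one has
\[
a^2-b^2 \subseteq P \quad \Longleftrightarrow \quad (a-b)\circ(a+b) \subseteq P.
\]

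To establish this lemma, I would introduce the auxiliary set $S := a^2 + a\circ b - a\circ b - b^2$, which is a sum of four hyperproducts ($a\circ a$, $a\circ b$, $(-a)\circ b$, $b\circ(-b)$) and therefore lies in $\mathfrak{C}$. Using the one-sided distributivity of $\circ$ over $+$ together with the axiom $(-x)\circ y=-(x\circ y)$, a routine expansion gives $(a-b)\circ(a+b)\subseteq S$. On the other hand, inserting $s-s=0$ for any fixed $s\in a\circ b$ into any expression $u-v$ with $u\in a^2,\ v\in b^2$ shows $a^2-b^2\subseteq S$. Hence if $P$ meets either $(a-b)\circ(a+b)$ or $a^2-b^2$ non-trivially, then $S\cap P\neq\varnothing$, and the strong $\mathcal{C}$-hyperideal property of $P$ forces $S\subseteq P$; in particular both $(a-b)\circ(a+b)\subseteq P$ and $a^2-b^2\subseteq P$.

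Granted this lemma, the two directions are short. For $(\Rightarrow)$, I would assume $P$ is sdf-absorbing and, toward a contradiction, pick $0\neq x,y\in H\setminus P$ with $x\circ y\subseteq P$ and nonzero $a,b\in H$ satisfying $a-b=x$ and $a+b=y$; the lemma gives $a^2-b^2\subseteq P$, and sdf-absorption forces $a-b=x\in P$ or $a+b=y\in P$, contradicting $x,y\notin P$. For $(\Leftarrow)$, I would take $0\neq a,b\in H$ with $a^2-b^2\subseteq P$, set $x=a-b$ and $y=a+b$, and assume toward a contradiction that $x,y\notin P$ (hence $x,y\neq 0$ since $0\in P$); the lemma gives $x\circ y\subseteq P$, and applying the hypothesis to the pair $(x,y)$ forbids the witnesses $a,b$, a contradiction.

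The main obstacle I expect is the lemma itself. The elementwise inclusion $(a-b)\circ(a+b)\subseteq S$ is only one-sided, and the ``correction terms'' $\pm a\circ b$ do not cancel at the level of subsets of $H$; the role of the auxiliary set $S\in\mathfrak{C}$ is precisely to absorb these non-cancellations into a single strong-$\mathcal{C}$ test-set, after which the hypothesis on $P$ is exactly what is needed to transfer containment between $a^2-b^2$ and $(a-b)\circ(a+b)$.
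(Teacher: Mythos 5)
Your proposal is correct and follows essentially the same route as the paper: the paper's proof also passes through the auxiliary set $a^2 + a\circ b - a\circ b - b^2$ in both directions, using the strong $\mathcal{C}$-hyperideal property together with the insertion of $r-r=0$ (for $r\in a\circ b$) to transfer containment in $P$ between $a^2-b^2$ and $(a-b)\circ(a+b)$. Your packaging of this as a standalone equivalence lemma, and your explicit handling of the case $x=0$ or $y=0$ in the backward direction, are minor organizational improvements rather than a different argument.
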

\begin{proof}
$\Longrightarrow$ Assume that  $x \circ y \subseteq P$ for $0 \neq x,y \in H \backslash P$ and both equations  $A-B=x$ and $A+B=y$ has a solution in $H$ for some $0 \neq a,b \in H$. This implies that $(a-b) \circ (a+b) = x \circ y \subseteq P$. Since $P$ is a proper  strong $\mathcal{C}$-hyperideal of $H$ and $(a-b) \circ (a+b) \subseteq a^2+a \circ b -a \circ b -b^2$, we conclude that $a^2+a \circ b -a \circ b -b^2 \subseteq P$. Take any $t \in a^2$, $s \in b^2$ and $r \in a \circ b $. Since  $t+r-r-s \in a^2+a \circ b -a \circ b +b^2$ and $t-s \in a^2 -b^2$, we get $a^2 -b^2 \subseteq P$. Since $P$ is an sdf-absorbing hyperideal of $H$, we have $a-b=x \in P$ or $a +b=y \in P$ which is impossible.

$\Longleftarrow$ Let $a^2-b^2 \subseteq P$ for $0 \neq a,b \in H$. Assume that $a-b=x$ and $a+b=y$. Take any $t \in a^2$, $s \in b^2$ and $r \in a \circ b $. Since  $P$ is a proper  strong $\mathcal{C}$-hyperideal of $H$, $t-s \in a^2 -b^2$ and $t+r-r-s \in a^2+a \circ b -a \circ b +b^2$, we have $x \circ y =(a-b) \circ (a+b) \subseteq a^2 -a \circ b + a \circ b -b^2 \subseteq P$ and both equations  $A-B=x$ and $A+B=y$ has a solution in $H$ for $0 \neq a,b \in H$. Hence we have $a-b=x \in P$ or $a+b=y \in P$. Consequently,  $P$ is an sdf-absorbing hyperideal of $H$.
\end{proof}

An element $x \in H$ is said to be regular if there exists $y \in H$ such that $x \in x^2\circ y$. So, $H$ is regular if all of elements in $H$ are regular\cite{ameri5}. 
\begin{lem}\label{5}
Let every hyperideal of $H$ be a $\mathcal{C}$-hyperideal. Then $H$ is regular if and only if every hyperideal of $H$  is equal to its radical. 
\end{lem}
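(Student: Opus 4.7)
My strategy is to prove the two implications separately, following the classical pattern for commutative von Neumann regular rings while paying close attention to the set-valued nature of $\circ$.

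For the forward direction, suppose $H$ is regular and let $I$ be a hyperideal. Since $I \subseteq rad(I)$ always holds, I need only prove $rad(I) \subseteq I$. Pick $t \in rad(I)$; since $I$ is a $\mathcal{C}$-hyperideal, Proposition 3.2 of \cite{das} gives $t \in D(I)$, so $t^n \subseteq I$ for some $n \in \mathbb{N}$. By regularity of $t$, choose $y \in H$ with $t \in t^2 \circ y$. The core step is to prove by induction on $k \geq 2$ that $t \in t^k \circ y^{k-1}$. I would first establish the auxiliary inclusion $t^k \subseteq t^{k+1} \circ y$ by writing $t^k = t^{k-1} \circ t$, substituting $\{t\} \subseteq t^2 \circ y$ in the last factor, and invoking associativity; then feed this back into the inductive hypothesis to bump $k$. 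Applied at $k = n$, this yields $t \in t^n \circ y^{n-1} \subseteq I \circ y^{n-1} \subseteq I$ since $I$ is a hyperideal.

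For the converse, suppose every hyperideal of $H$ equals its radical, and fix $x \in H$. I would consider the hyperideal $\langle x^2 \rangle$ generated by $x \circ x$. Since $x^2 \subseteq \langle x^2 \rangle$, it follows directly that $x \in D(\langle x^2 \rangle)$, and the $\mathcal{C}$-hyperideal property combined with the standing hypothesis gives $D(\langle x^2 \rangle) \subseteq rad(\langle x^2 \rangle) = \langle x^2 \rangle$. Hence $x \in \langle x^2 \rangle$. The remaining step is to extract a single $y \in H$ for which $x \in x^2 \circ y$, using the explicit description of the principal hyperideal in a commutative multiplicative hyperring with identity: elements of $\langle x^2 \rangle$ are captured by finite sums $\sum_i r_i \circ a_i$ with $a_i \in x \circ x$, and invoking associativity $r_i \circ (x \circ x) = (r_i \circ x) \circ x$, commutativity of $\circ$, and the additive group structure of $H$ should let one collapse the representation of $x$ into a single right-factor of the form $x^2 \circ y$.

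The heart of the argument, and the step I expect to be the main obstacle, is precisely this last collapse in the backward direction. In ordinary commutative ring theory it is automatic because $(x^2) = H x^2$ is literally a set of single products, but in a multiplicative hyperring only subdistributivity is available, so $\sum_i r_i \circ x^2$ does not directly collapse to $(\sum_i r_i) \circ x^2$. Overcoming this will require carefully exploiting commutativity, associativity, and the $\mathcal{C}$-hyperideal property, together with the observation that only the single element $x$ (not the entire generated hyperideal) needs to be represented as a single product $x^2 \circ y$.
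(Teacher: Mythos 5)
Your forward direction is essentially the paper's own argument. The paper takes $x\in rad(P)$, gets $x^n\subseteq P$ (implicitly using the $\mathcal{C}$-hyperideal hypothesis so that $rad(P)=D(P)$), picks $y$ with $x\in x^2\circ y$, and iterates the substitution to obtain $x\in x\circ x^n\circ y^n\subseteq P$; your induction $t\in t^k\circ y^{k-1}$ is the same computation written slightly differently, and it is correct: the auxiliary inclusion $t^k\subseteq t^{k+1}\circ y$ follows from $\{t\}\subseteq t^2\circ y$, monotonicity of $\circ$ on subsets, and associativity, and $t^n\circ y^{n-1}\subseteq I$ because $I$ is a hyperideal.

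For the converse your route is also exactly the paper's: from $x\in rad(\langle x^2\rangle)=\langle x^2\rangle$ one wants a single $y$ with $x\in x^2\circ y$. The step you flag as the main obstacle --- collapsing a representation $x\in\sum_i r_i\circ a_i$ with $a_i\in x\circ x$ into one product $x^2\circ y$ --- is precisely the step the paper does not carry out: its proof simply writes ``It follows that $x\in x^2\circ y$ for some $y\in H$'' with no justification. Your worry is well founded, since the distributivity axiom only gives $x\circ(y+z)\subseteq x\circ y+x\circ z$, i.e.\ the inclusion points the wrong way for merging $\sum_i x^2\circ r_i$ into $x^2\circ\bigl(\sum_i r_i\bigr)$, and summands $r_1\circ a_1+r_2\circ a_2$ with distinct $a_1,a_2\in x\circ x$ do not obviously combine even under strong distributivity. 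So your proposal is incomplete at exactly the point where the published proof is also incomplete; closing it honestly would require either an explicit description of principal hyperideals under which $\langle x^2\rangle$ is exhausted by single products $x^2\circ y$, or an added hypothesis (e.g.\ strong distributivity together with a suitable form of the generated hyperideal). You have not introduced a new error, but you have correctly located the soft spot in the lemma as stated.
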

\begin{proof}
$\Longrightarrow$ Let $H$ be regular and $P$ be an arbitrary $\mathcal{C}$-hyperideal of $H$.  It is sufficient to show that $rad(P) \subseteq P$. Take any $x \in rad(P)$. Then we have $x^n \subseteq P$ for some $n \in \mathbb{N}$. Since $H$ is regular, there exists $y \in H$ such that $x \in x^2 \circ y$. Then we get $x \in x^2 \circ y \subseteq  x \circ x^2 \circ y^2 \subseteq x \circ x^ 3 \circ y^3 \subseteq  \cdots \subseteq x \circ x^n \circ y^n \subseteq P$. This shows that $rad(P) \subseteq P$. Since the inclusion $P \subseteq rad(P)$ always holds, we have $rad(P)=P$, as required. 

$\Longleftarrow$ Assume that every hyperideal  is equal to its radical. Then $x \in rad(\langle x^2 \rangle) =\langle x^2 \rangle$ for all $x \in H$. It follows that $x \in x^2 \circ y$ for some $y \in H$. Thus $x$ is regular for every $x \in H$ and so $H$ is regular.
\end{proof}
\begin{corollary}
Assume that $H$ is of characteristic $2$. If $H$ is regular, then every nonzero proper strong $\mathcal{C}$-hyperideal of $H$ is an sdf-absorbing hyperideal of $H$.
\end{corollary}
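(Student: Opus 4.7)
The plan is to compose Lemma \ref{5} with Theorem \ref{2}. The bridge between them is the equality $rad(P)=P$: regularity of $H$ supplies it, and Theorem \ref{2} consumes it (together with the characteristic-$2$ hypothesis) to conclude that $P$ is sdf-absorbing.

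First I would verify that the hypotheses transfer. Every strong $\mathcal{C}$-hyperideal is a $\mathcal{C}$-hyperideal since $\mathcal{C}\subseteq\mathfrak{C}$, so $P$ qualifies as a $\mathcal{C}$-hyperideal for the purpose of Lemma \ref{5}; in particular one has $D(P)=rad(P)$ by Proposition 3.2 of \cite{das}. Then, for any $t\in rad(P)$ pick $n\in\mathbb{N}$ with $t^n\subseteq P$ and $y\in H$ with $t\in t^2\circ y$ (regularity), and iterate the relation $n$ times to obtain $t\in t\circ t^n\circ y^n\subseteq P$, since $t^n\subseteq P$ and $P$ is a hyperideal. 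This shows $rad(P)\subseteq P$, and the reverse containment is automatic, so $rad(P)=P$. I prefer to reproduce this short computation in place rather than quote Lemma \ref{5} wholesale, because the lemma is stated under the blanket hypothesis that \emph{every} hyperideal of $H$ be a $\mathcal{C}$-hyperideal, which is not assumed in the corollary; in reality its forward direction only uses the $\mathcal{C}$-hyperideal property of the single hyperideal under consideration.

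With $rad(P)=P$ in hand, $H$ of characteristic $2$, and $P$ a proper strong $\mathcal{C}$-hyperideal, Theorem \ref{2} applies directly and yields that $P$ is sdf-absorbing. There is no substantial obstacle here; the corollary is essentially a two-step invocation, and the only point deserving a little care is the hypothesis-matching just discussed. The nonzero assumption on $P$ plays no role in the chain of implications beyond aligning with the definition of sdf-absorbing hyperideals.
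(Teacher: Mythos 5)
Your proposal is correct and follows essentially the same route as the paper: establish $rad(P)=P$ from regularity and then invoke Theorem \ref{2}. Your extra care about hypothesis-matching is warranted --- the paper simply cites Lemma \ref{5}, whose blanket hypothesis that \emph{every} hyperideal of $H$ be a $\mathcal{C}$-hyperideal is not assumed in the corollary, whereas your inlined argument uses only that the single strong $\mathcal{C}$-hyperideal $P$ is itself a $\mathcal{C}$-hyperideal, which is exactly what is available.
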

\begin{proof}
Let $H$ be of characteristic $2$. Since $H$ is regular, every hyperideal of $H$  is equal to its radical by Lemma \ref{5}. Therefore every nonzero proper strong $\mathcal{C}$-hyperideal of $H$ is an sdf-absorbing hyperideal of $H$ by Theorem \ref{2}. 
\end{proof}
Assume that $Q$ is a hyperideal of $(H,+,\circ)$. The the set $(H/Q=\{x+Q \ \vert \ x \in H\}, \oplus, *)$ is a multiplicative hyperring where $\oplus$ is the usual addition of cosets and $\star$ is defined by
\[(x+Q) *(y+Q)=\{z+Q \ \vert \ z \in x \circ y\} .\]
$H/Q$ is commutative if $H$ is so \cite{f10}.

\begin{theorem} \label{7}
 Let every nonzero proper hyperideal of $H$ be an sdf-absorbing $\mathcal{C}$-hyperideal of $H$. Then $H/\Upsilon$ is regular. Moreover,  there are no non-trivial chains of prime hyperideals of $H$.
\end{theorem}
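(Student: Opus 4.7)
The plan is to separate the two conclusions and derive each from material already available. For the regularity of $H/\Upsilon$, the natural tool is Lemma \ref{5}: it suffices to check that every hyperideal of $H/\Upsilon$ is a $\mathcal{C}$-hyperideal that coincides with its own radical. Hyperideals of $H/\Upsilon$ are precisely the hyperideals $P/\Upsilon$ with $P$ a hyperideal of $H$ containing $\Upsilon$, so I would reduce the task to checking the radical equality $rad(P)=P$ for every hyperideal $P$ of $H$ and then pushing it through the canonical projection.

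For the first equality, if $P$ is a nonzero proper hyperideal of $H$, the standing hypothesis makes $P$ a nonzero sdf-absorbing $\mathcal{C}$-hyperideal, and Theorem \ref{1} supplies $rad(P)=P$. The trivial cases are handled directly: $rad(H)=H$, and the zero hyperideal of $H/\Upsilon$ equals its radical because $(x+\Upsilon)^n \subseteq \Upsilon$ forces $x^n \subseteq \Upsilon$ and hence $x\in \Upsilon$, by definition of the nilradical. Combined, every hyperideal of $H/\Upsilon$ equals its radical, and Lemma \ref{5} then delivers the regularity.

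For the statement about chains of primes, I would use that every prime hyperideal of $H$ contains $\Upsilon$, so primes of $H$ correspond bijectively to primes of $H/\Upsilon$; it is therefore enough to rule out non-trivial chains in $H/\Upsilon$. Given a prime $\overline{P}$ of $H/\Upsilon$ and $\overline{x}\notin \overline{P}$, regularity provides $\overline{y}$ with $\overline{x}\in \overline{x}^2\circ \overline{y}$, so $\overline{x}\circ(\overline{1}-\overline{x}\circ \overline{y})\subseteq \overline{0}\subseteq \overline{P}$. Primeness then puts $\overline{1}-\overline{x}\circ \overline{y}$ into $\overline{P}$, which shows that $\overline{x}$ becomes a unit in $(H/\Upsilon)/\overline{P}$. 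Hence $\overline{P}$ is maximal, and no non-trivial chain of primes can exist in $H/\Upsilon$, nor therefore in $H$.

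The main obstacle I anticipate is not conceptual but structural: transferring the $\mathcal{C}$-hyperideal property to the quotient $H/\Upsilon$ and, in the final step, handling the hyperoperation subtleties when writing $\overline{x}\circ(\overline{1}-\overline{x}\circ \overline{y})$, where the distributive law is only an inclusion and the containment $\overline{x}\circ(\overline{1}-\overline{x}\circ \overline{y})\subseteq \overline{P}$ must be justified element-wise rather than as a ring-theoretic factorisation. Once these inclusions are verified cleanly, the rest is a direct invocation of Theorem \ref{1} and Lemma \ref{5}.
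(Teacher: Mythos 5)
Your proposal is correct and follows essentially the same route as the paper: regularity of $H/\Upsilon$ via Theorem~\ref{1}, transfer of the radical property to the quotient, and Lemma~\ref{5}; then absence of prime chains via the observation that regularity places $x \circ (x\circ y - 1)$ inside any prime containing $x$, so every prime is maximal. The only cosmetic differences are that you verify the radical equality in $H/\Upsilon$ by hand where the paper cites an external corollary, and you phrase the second conclusion as maximality of primes rather than deriving a contradiction directly from a chain $P_1 \subsetneq P_2$; the distributivity subtlety you flag at the end is present (and treated no more carefully) in the paper's own argument.
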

\begin{proof}
Assume that every nonzero proper hyperideal of $H$ be an sdf-absorbing $\mathcal{C}$-hyperideal of $H$. Therefore we conclude that every hyperideal of $H$ is equal to its radicals by Theorem \ref{1}. This means that every hyperideal of $H/\Upsilon$ is equal to its radicals by Corollary 2.10 in \cite{Sen}. This implies that $H/\Upsilon$ is regular by Lemma \ref{5}. Now, let $P_1 \subsetneq P_2$ is a non-trivial chain of prime hyperideals of $H$ and $x \in P_2$. Since $H$ is regular, there exists $y \in H$ such that $x \in x^2 \circ y$. Since  $0 \in x^2 \circ y - x \cap P_1$, we have $x \circ (x \circ y - 1) \subseteq P_1$. This implies  $x \in P_1$  or $x \circ y -1 \subseteq P_1 \subseteq P_2$. The first case is impossible. In the second case, since $x \in P_2$,  we get  $1 \in P_2$ which is a contradiction. Thus  there are no non-trivial chains of prime hyperideals of $H$. 
\end{proof}
\begin{theorem}\label{8}
Let every nonzero proper hyperideal of $H$ be a $\mathcal{C}$-hyperideal and  $P$ be the only maximal hyperideal of $H$. Then every nonzero proper hyperideal of $H$ is an sdf-absorbing hyperideal of $H$ if and only if $P$ is the only prime hyperideal of $H$, $P$ is principal and $P^2=\{0\}$.
\end{theorem}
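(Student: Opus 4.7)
The plan is to prove the two implications separately, with the forward direction carrying the structural content. Throughout, the running hypothesis that every nonzero proper hyperideal is a $\mathcal{C}$-hyperideal makes Theorems \ref{1} and \ref{7} applicable.

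For the forward implication, I would first invoke Theorem \ref{7}: $H$ has no non-trivial chain of prime hyperideals, so since every prime lies inside a maximal and $P$ is the unique maximal, $P$ is the unique prime of $H$. By Theorem \ref{1} every nonzero proper hyperideal $A$ equals $rad(A)=P$, hence the only nonzero proper hyperideal is $P$ itself; in particular $\langle a\rangle = P$ for every $0\neq a\in P$, so $P$ is principal. It then remains to show $P^{2}=\{0\}$, and since $P^{2}$ is a hyperideal inside $P$, either $P^{2}=\{0\}$ or $P^{2}=P$. I would rule out the latter by a Nakayama-style argument: fix a generator $p$ of $P$; the chain $P=P^{2}\subseteq\langle p\circ p\rangle\subseteq P$ forces $\langle p\circ p\rangle = P$ and in particular $p\circ p\neq\{0\}$. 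Picking $0\neq q\in p\circ p$ gives $\langle q\rangle = P$, so $p\in\langle q\rangle$ unpacks as $p\in s\circ p\circ p$ for some $s\in H$. Via the distributivity inclusions this yields $0\in p\circ u$ for some $u\in 1-s\circ p\subseteq 1+P$; since $P$ is the unique maximal, $P=J(H)$ and every element of $1+P$ is a unit, so cancelling $u$ forces $p=0$, a contradiction.

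For the reverse implication, assume $P$ is the unique prime, $P=\langle p\rangle$ is principal, and $P^{2}=\{0\}$. The vanishing $P^{2}=\{0\}$ makes the action of $H$ on $P$ factor through the quotient $H/P$, which is a hyperfield (maximality of $P$ leaves it with no nontrivial proper hyperideals); principality then exhibits $P$ as a one-dimensional $H/P$-module, whose only sub-hyperideals are $\{0\}$ and $P$. Thus $P$ is the only nonzero proper hyperideal of $H$ and it suffices to verify that $P$ is sdf-absorbing. For $0\neq x,y\in H$ with $x^{2}-y^{2}\subseteq P$, the case $x\in P$ or $y\in P$ forces both to lie in $P$ (otherwise $y^{2}$ or $x^{2}$ contains a unit, contradicting $x^{2}-y^{2}\subseteq P$), whence $x+y\in P$; in the remaining case $\bar x,\bar y$ are nonzero in $H/P$, and $\bar x^{2}=\bar y^{2}$ factors via the hyperfield inverse as $(\bar x\bar y^{-1})^{2}=\bar 1$, giving $\bar x\bar y^{-1}=\pm\bar 1$ and thus $x-y\in P$ or $x+y\in P$.

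The main obstacle is the Nakayama-style cancellation in the forward direction, because the classical single-element argument must be read as a membership statement in hypersets; the key leverage that survives is $1+P\subseteq U(H)$, which furnishes an invertible witness $u$ and enables the cancellation. A secondary technical point in the reverse direction is handling the hyperfield structure of $H/P$ cleanly enough to conclude that $a^{2}=b^{2}$ forces $a=\pm b$ in the quotient.
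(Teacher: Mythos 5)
Your skeleton is exactly the paper's: the forward direction runs through Theorem \ref{7} (no non-trivial chains of primes, so $P$ is the unique prime) and Theorem \ref{1} ($I=rad(I)=P$ for every nonzero proper $I$, so $P$ is the only nonzero proper hyperideal, hence principal), and the reverse direction reduces to ``$P$ is the only nonzero proper hyperideal, and such a $P$ is sdf-absorbing.'' The paper states both of the remaining steps --- $P^{2}=\{0\}$ in the forward direction and the sdf-absorbing verification in the reverse direction --- essentially without proof, so the two places where you add content are precisely the places the paper is silent.

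The problem is that both of your fill-ins import classical ring arguments that do not survive the passage to multiplicative hyperrings. In the Nakayama step, distributivity is only an \emph{inclusion} $p\circ(1-t)\subseteq p\circ 1 - p\circ t$, which points the wrong way: from $p\in p\circ 1$ and $p\in p\circ t_{0}$ you can place $0=p-p$ in $p\circ 1-p\circ t_{0}$, but you cannot conclude $0\in p\circ u$ for $u\in 1-s\circ p$. Worse, even granting $0\in p\circ u$ with $u$ a unit, this does not force $p=0$: from $e\in u\circ v$ you only get that both $0$ and $p$ lie in the hyperset $(p\circ u)\circ v$, which is no contradiction (in $\mathbb{Z}_{\Omega}$ with $0\in\Omega$ one has $0\in p\circ u$ for all $p,u$). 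In the reverse direction, ``$\bar x^{2}=\bar y^{2}$'' is a relation between hypersets, not an equation of elements, and the deduction $(\bar x\bar y^{-1})^{2}=\bar 1\Rightarrow\bar x\bar y^{-1}=\pm\bar 1$ rests on the exact factorization $z^{2}-1=(z-1)(z+1)$ plus absence of zero divisors; in a multiplicative hyperring the factorization is again only an inclusion into a larger set (and pushing $(x-y)\circ(x+y)$ into $P$ from $x^{2}-y^{2}\subseteq P$ needs a \emph{strong} $\mathcal{C}$-hyperideal hypothesis, which the theorem does not assume). So while your decomposition is the intended one and your Case~1 analysis in the reverse direction is fine, the two key supplementary lemmas as written would fail; they need either a strong distributivity/strong $\mathcal{C}$-hyperideal assumption or a genuinely hyperring-theoretic replacement.
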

\begin{proof}
$\Longrightarrow$ Let every nonzero proper hyperideal of $H$ be an sdf-absorbing hyperideal of $H$. By Theorem \ref{7}, we conclude that $P$ is the only prime hyperideal of $H$. Assume that $I$ is an arbitrary nonzero hyperideal of $H$. By Theorem \ref{1}, we have $I=rad(I)=P$. This means  that $P$ is the only nonzero hyperideal of $H$ which means $P$ is principal and  $P^2=\{0\}$.

$\Longleftarrow$ Assume that $P$ is the only prime hyperideal of $H$, $P$ is principal and $P^2=\{0\}$. This implies that $P$ is the only nonzero proper hyperideal of $H$ and so  $P$ is an sdf-absorbing hyperideal of $H$.
\end{proof}
\begin{theorem}\label{9}
Let $H$ be a regular multiplicative hyperring and every hyperideal of $H$ be a $\mathcal{C}$-hyperideal of $H$. 
If there is only one maximal hyperideal $I$ of $H$ such that $H/I$ is not of characteristic $2$, then every  proper hyperideal of $H$ is an sdf-absorbing hyperideal of $H$.
\end{theorem}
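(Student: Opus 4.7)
The plan is to mimic the classical factorization $x^{2}-y^{2}=(x-y)(x+y)$ modulo $P$, using the structural reductions already available in the paper and the invertibility of $2$ granted by the local hypothesis. First I would record the two structural consequences of the hypotheses: since $H$ is regular and every hyperideal is a $\mathcal{C}$-hyperideal, Lemma~\ref{5} yields $rad(A)=A$ for every hyperideal $A$ of $H$; in particular $P=rad(P)$. Moreover, since $I$ is the unique maximal hyperideal and $H/I$ is not of characteristic $2$, $1+1\notin I$; and since every element of $H\setminus I$ is a unit in a local hyperring, this gives $2\in U(H)$.

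Next, fix $0\ne x,y\in H$ with $x^{2}-y^{2}\subseteq P$, the goal being to produce $x-y\in P$ or $x+y\in P$. Using distributivity and the commutativity of $\circ$,
\[
(x-y)\circ(x+y)\subseteq (x^{2}-y^{2})+(x\circ y-x\circ y)\subseteq P+(x\circ y-x\circ y),
\]
so squaring and absorbing cross terms (each lying in $P\circ H\subseteq P$) gives $\bigl((x-y)(x+y)\bigr)^{2}\subseteq P+(x\circ y-x\circ y)^{2}$. Using the identity $4(x\circ y)\subseteq (x+y)^{2}-(x-y)^{2}$ together with $x^{2}-y^{2}\subseteq P$, the $\mathcal{C}$-hyperideal property, and the invertibility of $4=2\cdot 2$, I would argue that $(x\circ y-x\circ y)^{2}$ is absorbed into $P$. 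The radical identity $P=rad(P)$ then upgrades $\bigl((x-y)(x+y)\bigr)^{2}\subseteq P$ to $(x-y)\circ(x+y)\subseteq P$.

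Finally, I would appeal to the local structure: if neither $x-y$ nor $x+y$ lay in $I$ then both would be units, so $(x-y)\circ(x+y)$ would contain a unit, contradicting $P\subsetneq H$. Hence $x-y\in I$ or $x+y\in I$, and a standard argument in the regular local setting---that the only prime of $H$ is $I$, so every nonzero radical hyperideal collapses to $I$---upgrades this to $x-y\in P$ or $x+y\in P$, completing the verification that $P$ is sdf-absorbing.

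The main obstacle is absorbing the error term $(x\circ y-x\circ y)^{2}$ into $P$ in the middle step: this term is nontrivial precisely because multiplicative hyperrings are not assumed strongly distributive, so $x\circ y$ need not be a singleton, and closing the gap requires the simultaneous use of $P=rad(P)$, the $\mathcal{C}$-hyperideal property, and the invertibility of $2$ in $H$.
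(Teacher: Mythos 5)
Your proposal rests on a misreading of the hypothesis, and the misreading is fatal to the whole strategy. The theorem does not assert that $H$ is local: it asserts that among the (possibly many) maximal hyperideals of $H$, exactly one, namely $I$, has the property that $H/I$ is \emph{not} of characteristic $2$; every other maximal hyperideal $J$ satisfies $1+1\in J$. (The paper's own proof makes this explicit by treating maximal hyperideals $J\neq I$ containing $P$.) Under the correct reading your key structural claims collapse: since $1+1$ lies in every maximal hyperideal $J\neq I$, the element $1+1$ is a non-unit whenever there is more than one maximal hyperideal, so $2\in U(H)$ is false in general and the invertibility of $4$ that your middle computation needs is unavailable; elements of $H\setminus I$ need not be units; $P$ need not be comparable with $I$; and the closing step ``every nonzero radical hyperideal collapses to $I$'' has no basis. (Even on the local reading the theorem would be nearly vacuous: regularity plus Lemma~\ref{5} forces $rad(\{0\})=\{0\}$, and with a unique prime this pushes $I$ down to $\{0\}$.) On top of this, you yourself flag that absorbing the error term $(x\circ y-x\circ y)^{2}$ into $P$ is an unresolved obstacle, so even the computational core of the factorization argument is incomplete.

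The paper's argument goes a different and much softer way, which you may want to adopt. By Lemma~\ref{5}, $P=rad(P)$, so $P$ is the intersection of the prime (maximal) hyperideals containing it, and one verifies the sdf condition one maximal hyperideal at a time: for a maximal $J\supseteq P$ with $J\neq I$, the hyperring $H/J$ has characteristic $2$, and Theorem~\ref{3} yields \emph{both} $x-y\in J$ and $x+y\in J$; for $J=I$ (when $P\subseteq I$) one gets at least one of $x-y\in I$, $x+y\in I$. Consequently one of $x-y$, $x+y$ lies in \emph{every} maximal hyperideal containing $P$, hence in $rad(P)=P$. No invertibility of $2$ and no control of cross terms is needed; the only delicate point is the single exceptional hyperideal $I$, which is exactly where your factorization idea could, at best, be redeployed.
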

\begin{proof}
Let $I$ be the only  maximal hyperideal of $H$ such that $H/I$ is not of characteristic $2$. Assume that $P$ is an arbitrary hyperideal of $H$. Since $H$ is regular, we conclude that $rad(P)=P$ by Lemma \ref{5}. Let us assume that $x^2 - y^2 \subseteq P$ for $x,y \in H$. If $P \subseteq I$, then we conclude that  $x-y \in I$ or $x+y \in I$. Now, let $J \neq I$ be a maximal hyperideal of $H$ with $P \subseteq J$. By the hypothesis, $H/J$ is of characteristic $2$. Since $x^2 - y^2 \subseteq J$, we get $x-y \in J$ and $x+y \in J$ by Theorem \ref{3}. This shows that  every prime (maximal) hyperideal of $H$ containing $I$ consists of   $x-y$ or every prime (maximal) hyperideal of $H$ containing $I$ consists of  $x+y$ which implies $I$ is an sdf-absorbing hyperideal of $H$.
\end{proof}
\begin{theorem}
Assume that $P_1,\cdots,P_n$ are prime strong $\mathcal{C}$-hyperideals of  $H$ such that the intersection of any $n-1$ of $P_i^,$s is not equal to $P_1 \cap \cdots \cap P_n$. Then $P_1 \cap \cdots \cap P_n$ is an sdf-absorbing  hyperideal of $H$ if and only if there is at most one $1 \leq i \leq n$ such that $H/P_i$ is not of characteristis $2$.
\end{theorem}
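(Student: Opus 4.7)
The plan is the following. Write $P := P_1 \cap \cdots \cap P_n$ and begin by recording two general facts: (i) $P$ inherits the strong $\mathcal{C}$-hyperideal property from the $P_i$'s, since any sum-of-products meeting $P$ meets each $P_i$ and therefore lies in each $P_i$; and (ii) the computation used in Theorem \ref{4} yields, for any strong $\mathcal{C}$-hyperideal $Q$ of $H$ and any $a, b \in H$, the equivalence $a^2 - b^2 \subseteq Q \iff (a - b) \circ (a + b) \subseteq Q$. The essentiality hypothesis $\bigcap_{j \neq i} P_j \supsetneq P$ lets me pick, for each $i$, an element $\alpha_i \in \bigcap_{j \neq i} P_j$ with $\alpha_i \notin P_i$.

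For the direction ($\Leftarrow$), after relabeling I may assume $H/P_2, \ldots, H/P_n$ are all of characteristic $2$, i.e.\ $1+1 \in P_i$ for $i \geq 2$. Given $a^2 - b^2 \subseteq P$ with $0 \neq a, b$, fact (ii) and the primality of each $P_i$ yield $a - b \in P_i$ or $a + b \in P_i$ for every $i$. For $i \geq 2$, the relation $1 + 1 \in P_i$ together with the strong $\mathcal{C}$-argument used in Theorem \ref{3} forces $2a, 2b \in P_i$, so $a - b \in P_i$ is equivalent to $a + b \in P_i$, and both hold. For $i = 1$, whichever of $a - b$ or $a + b$ lies in $P_1$ already lies in $P_2, \ldots, P_n$, hence in $P$. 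Therefore $P$ is sdf-absorbing.

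For the direction ($\Rightarrow$), I argue contrapositively: suppose $H/P_1$ and $H/P_2$ are both not of characteristic $2$, so $1 + 1 \notin P_1$ and $1 + 1 \notin P_2$. Pick $\alpha \in \bigcap_{j \neq 1} P_j \setminus P_1$ and $\beta \in \bigcap_{j \neq 2} P_j \setminus P_2$, and set $a := \alpha + \beta$, $b := \alpha - \beta$. A short coset check gives $a, b \equiv \alpha \pmod{P_1}$ (since $\beta \in P_1$) and $a \equiv \beta, b \equiv -\beta \pmod{P_2}$ (since $\alpha \in P_2$), so $a, b \notin P_1 \cup P_2$; in particular $a, b \neq 0$. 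The product $\alpha \circ \beta$ lies in each $P_i$ (in $P_1$ because $\beta \in P_1$, in $P_2$ because $\alpha \in P_2$, and in each $P_j$ with $j \geq 3$ because both $\alpha, \beta \in P_j$), hence $\alpha \circ \beta \subseteq P$. By distributivity $(a - b) \circ (a + b) = (2\beta) \circ (2\alpha) \subseteq 4(\alpha \circ \beta) \subseteq P$, so fact (ii) gives $a^2 - b^2 \subseteq P$. However, $a - b = 2\beta \notin P_2$ (because $1 + 1, \beta \notin P_2$ and $P_2$ is prime) and symmetrically $a + b = 2\alpha \notin P_1$, so neither $a - b$ nor $a + b$ belongs to $P$, contradicting the sdf-absorbing property.

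The main obstacle I anticipate is the hyperoperation bookkeeping behind fact (ii), namely cleanly deriving $(a - b) \circ (a + b) \subseteq P_i$ from $a^2 - b^2 \subseteq P_i$ and conversely, where the strong $\mathcal{C}$-hyperideal hypothesis must be used exactly in the form exploited in Theorem \ref{4}. Once that equivalence is securely in hand, both implications reduce to standard primality arguments controlled by whether $1 + 1$ lies in the various $P_i$.
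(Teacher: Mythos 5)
Your proposal is correct and follows essentially the same route as the paper: both directions reduce $a^2-b^2\subseteq P$ to $(a-b)\circ(a+b)\subseteq P$ via the strong $\mathcal{C}$-hyperideal computation of Theorem \ref{4}, use primality of each $P_i$, and split according to whether $1+1\in P_i$. Your forward direction is in fact slightly more careful than the paper's: you choose $\beta\in\bigcap_{j\neq 2}P_j\setminus P_2$ (rather than a generic $y\in P_1\setminus\bigcap_{j\ge 2}P_j$, which need not avoid $P_2$ specifically) and you explicitly verify $a,b\neq 0$, both of which the paper glosses over.
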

\begin{proof}
$\Longrightarrow$ Let $P=P_1 \cap \cdots \cap P_n$. Assume that $P$ is an sdf-absorbing  hyperideal of $H$ for $n \geq 2$ and  neither  $H/P_1$ nor $H/P_1$ are of characteristic $2$. Put $Q=P_2 \cap \cdots \cap P_{n}$. So $P \subsetneq Q$. Since $P \neq P_1$, there exist $x \in Q \backslash P_1$ and $y \in P_1 \backslash Q$. Assume that $a=x-y$ and $b=x+y$. . Therefore we have $a^2-b^2 \subseteq x^2 -2x \circ y  +y^2 -x^2-2x \circ y  -y^2 \subseteq P$. Since $P_1$ is a prime hyperideal of $H$ and $H/P_1$ is not of characteristic $2$, we conclude that  $x \circ (1+1) \nsubseteq  P_1$ by Theorem \ref{3} and so $x \circ 1 +x \circ 1 \nsubseteq P_1$. From $a+b=2x \in x \circ 1+ x \circ 1$, it follows that $a+b \notin P_1$ because $P_1$ is a strong $\mathcal{C}$-hyperideal of $H$ and $x \circ 1 +x \circ 1 \nsubseteq P_1$. Also, since $P_2$ is a prime hyperideal of $H$ and $H/P_2$ is not of characteristic $2$, we conclude that  $y \circ (1+1) \nsubseteq  Q$  and so $y \circ 1 +y \circ 1 \nsubseteq Q $. Since  $a-b=-2y \in -y \circ 1 -y \circ 1 \nsubseteq Q$ and $Q$ is a strong $\mathcal{C}$-hyperideal, we conclude that $a-b \notin Q$. Therefore neither $a-b$ nor $a+b$ are in $P$ which is a contradiction as $P$ is an sdf-absorbing  hyperideal of $H$.

$\Longleftarrow$ It is seen to be true in a similar manner to Theorem \ref{9}.
\end{proof}
Recall from \cite{f10} that a mapping $\theta$ from the commutative multiplicative hyperring 
$(H_1, +_1, \circ _1)$ into the commutative multiplicative hyperring $(H_2, +_2, \circ _2)$ is a hyperring good homomorphism if $\theta(x +_1 y) =\theta(x)+_2 \theta(y)$ and $\theta(x\circ_1y) = \theta(x)\circ_2 \theta(y)$ for all $x,y \in H_1$.
\begin{theorem} \label{homo}
Let $\theta$ be a good homomorphism from the commutative multiplicative hyperring $H_1$ into the commutative multiplicative hyperring $H_2$ and $P_1$ and $P_2$ be strong $\mathcal{C}$-hyperideals of $H_1$ and $H_2$, respectively. 
\begin{itemize} 
\item[\rm(i)]~ If $P_2$ is a nonzero sdf-absorbing hyperideal of $H_2$, then $\theta^{-1}(P_2)$ is  an sdf-absorbing hyperideal of $H_1$.
\item[\rm(ii)]~If $P_2$ is an sdf-absorbing hyperideal of $H_2$ and $\theta$ is injective, then $\theta^{-1}(P_2)$ is  an sdf-absorbing hyperideal of $H_1$.
\item[\rm(iii)]~ If $P_1$ is an sdf-absorbing hyperideal of $H_1$ and $\theta$ is surjective such that $ker (\theta) \subseteq P_1$, then $\theta(P_1)$ is  an sdf-absorbing hyperideal of $H_2$. Then the following statements are satisfied.
\end{itemize} 
\end{theorem}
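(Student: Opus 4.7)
The strategy in all three parts is to transfer the sdf-absorbing condition across $\theta$: in (i) and (ii) I pull back, and in (iii) I push forward using surjectivity. The key technical identities I will exploit are the good-homomorphism rules $\theta(x^2-y^2)=\theta(x)^2-\theta(y)^2$ and $\theta(x\pm y)=\theta(x)\pm\theta(y)$, together with the fact that every good homomorphism sends $0$ to $0$ so that $\ker\theta\subseteq\theta^{-1}(P_2)$.

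For (ii), starting with $0\neq x,y\in H_1$ and $x^2-y^2\subseteq\theta^{-1}(P_2)$, applying $\theta$ yields $\theta(x)^2-\theta(y)^2\subseteq P_2$, and injectivity forces $\theta(x),\theta(y)\neq 0$, so the sdf-absorbing hypothesis on $P_2$ directly gives $\theta(x\pm y)\in P_2$ for one of the two signs, hence $x\pm y\in\theta^{-1}(P_2)$. Part (iii) is similarly clean: given $0\neq x',y'\in H_2$ with $x'^2-y'^2\subseteq\theta(P_1)$, surjectivity lets me pick preimages $x,y$ (automatically nonzero), and the inclusion $\ker\theta\subseteq P_1$ allows me to upgrade $\theta(x^2-y^2)\subseteq\theta(P_1)$ to $x^2-y^2\subseteq P_1$ (any $t\in x^2-y^2$ differs from some $p\in P_1$ by an element of $\ker\theta\subseteq P_1$). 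The sdf-absorbing property of $P_1$ then finishes by applying $\theta$.

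The main obstacle is part (i), because without injectivity the elements $\theta(x)$ or $\theta(y)$ may vanish, and the definition of sdf-absorbing requires both arguments to be nonzero. I plan to handle this with the nonzero hypothesis on $P_2$ as follows. After reducing to $\theta(x)^2-\theta(y)^2\subseteq P_2$, I split into cases. If both $\theta(x),\theta(y)\neq 0$ the argument is as in (ii). If $\theta(x)=0$, then $x\in\ker\theta\subseteq\theta^{-1}(P_2)$ (since $0\in P_2$), and from $\theta(y)^2\subseteq P_2$ I invoke Theorem \ref{1}, which applies because $P_2$ is a nonzero sdf-absorbing $\mathcal{C}$-hyperideal (strong $\mathcal{C}$-hyperideals are $\mathcal{C}$-hyperideals since $\mathcal{C}\subseteq\mathfrak{C}$), giving $\theta(y)\in rad(P_2)=P_2$, whence $y\in\theta^{-1}(P_2)$ and both $x\pm y\in\theta^{-1}(P_2)$. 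The case $\theta(y)=0$ is symmetric.

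Throughout I will also note that $\theta^{-1}(P_2)$ and $\theta(P_1)$ are proper hyperideals under the stated hypotheses (in (iii) properness uses $\ker\theta\subseteq P_1$ together with $P_1$ being proper), which is the only routine verification outside the sdf-absorbing condition itself.
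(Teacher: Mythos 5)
Your proposal is correct and follows essentially the same route as the paper: transfer $x^2-y^2\subseteq\theta^{-1}(P_2)$ to $\theta(x)^2-\theta(y)^2\subseteq P_2$ via the good-homomorphism identities, apply the sdf-absorbing hypothesis, and pull the conclusion back; for (iii), use surjectivity to lift and $\ker(\theta)\subseteq P_1$ to upgrade $\theta(x^2-y^2)\subseteq\theta(P_1)$ to $x^2-y^2\subseteq P_1$, exactly as the paper does. The one substantive difference is in part (i): the paper's proof applies the sdf-absorbing property of $P_2$ to the pair $\theta(x),\theta(y)$ without verifying that these images are nonzero (which the definition requires), and consequently never uses the ``nonzero'' hypothesis on $P_2$ at all. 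Your case analysis --- handling $\theta(x)=0$ by putting $x$ in $\ker\theta\subseteq\theta^{-1}(P_2)$ and extracting $\theta(y)\in rad(P_2)=P_2$ from $\theta(y)^2\subseteq P_2$ via Theorem \ref{1} (legitimate, since a strong $\mathcal{C}$-hyperideal is a $\mathcal{C}$-hyperideal) --- is precisely the missing step, and it explains why the nonzero hypothesis appears in (i) but not in (ii). The only detail left implicit is that $\theta(x)=0$ gives $\theta(x)^2=0\circ 0\subseteq P_2$ (because $P_2$ is a hyperideal containing $0$), which is what lets you isolate $\theta(y)^2\subseteq P_2$ from $\theta(x)^2-\theta(y)^2\subseteq P_2$; this is routine and worth one line in a final write-up.
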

\begin{proof}
(i) 
Let $x_1^2-x_2^2 \subseteq \theta^{-1}(P_2)$ for $x_1,x_2 \in H_1$. Take any $t_1 \in x_1^2$ and $t_2 \in x_2^2$. So $\theta(t_1-t_2) \in P_2$. Since  $P_2$ is a strong $\mathcal{C}$-hyperideals of $H_2$  and $\theta(t_1-t_2)=\theta(t_1)-\theta(t_2) \in \theta(x_1^2)-\theta(x_2^2)=\theta(x_1)^2-\theta(x_2)^2$, we get $\theta(x_1)^2-\theta(x_2)^2 \subseteq P_2$. Since $P_2$ is an sdf-absorbing hyperideal of $H_2$, we obtain $\theta(x_1-x_2)=\theta(x_1)-\theta(x_2) \in P_2$ or $\theta(x_1+x_2)=\theta(x_1)+\theta(x_2) \in P_2$. This implies that $x_1-x_2 \in \theta^{-1}(P_2)$ or $x_1+x_2 \in \theta^{-1}(P_2)$. Thus $\theta^{-1}(P_2)$ is  an sdf-absorbing hyperideal of $H_1$.

(ii) 
By using an argument similar to that in the proof of (i),
one can easily complete the proof.

(iii) Assume that $y_1^2-y_2^2 \subseteq \theta(P_1)$ for some $y_1, y_2 \in H_2$. Then there exist $x_1, x_2 \in H_1$ such that $\theta(x_1)=y_1$ and $\theta(x_2)=y_2$ as $\theta$ is surjective. So we have $\theta(x_1^2)-\theta(x_2^2) \subseteq \theta(P_1)$. Let $t_1 \in x_1^2$ and $t_2 \in x_2^2$. Then we have $\theta(t_1-t_2) \in \theta(P_1)$. This implies that $\theta(t_1-t_2)=\theta(t)$ for some $t \in P_1$. Therefore we have $\theta(t_1-t_2-t)=0$ which means $t_1-t_2-t \in Ker(\theta) \subseteq P_1$ and so $t_1 -t_2 \in P_1$. It follows $x_1^2-x_2^2 \subseteq P_1$ as $P_1$ is a strong $\mathcal{C}$-hyperideal. Since $P_1$ is an sdf-absorbing hyperideal of $H_1$, we conclude that $x_1-x_2 \in P_1$ or $x_1-x_2 \in P_1$. This means that $y_1-y_2=\theta(x_1-x_2) \in \theta(P_1)$ or $y_1+y_2=\theta(x_1+x_2) \in \theta(P_1)$. Hence $\theta(P_1)$ is  an sdf-absorbing hyperideal of $H_2$.
\end{proof}
Let us give the following theorem without proof as a result of Theorem \ref{homo}.
\begin{theorem} \label{10}
Let $P$ be a strong $\mathcal{C}$-hyperideal of $H$. Then the following statements are satisfied.
\begin{itemize} 
\item[\rm(i)]~ If $K$ is a multiplicative hyperring such that $K \subseteq H$ and  $P$ is an sdf-absorbing  hyperideal of $H$, then $P \cap K$ is an sdf-absorbing  hyperideal of $K$. 
\item[\rm(ii)]~ If $Q$ is a hyperideal of $H$ with $Q \subseteq P$ and $P$ is an sdf-absorbing  hyperideal of $H$, then  $P/Q$ is an sdf-absorbing  hyperideal of $H/Q$.
\item[\rm(iii)]~ Let $Q \subsetneq P$. Then $P$  is an sdf-absorbing  hyperideal of $H$  if and only if $P/Q$ is an sdf-absorbing  hyperideal of $H/Q$.
\end{itemize} 
\end{theorem}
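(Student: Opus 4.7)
The plan is to obtain all three parts as direct corollaries of Theorem \ref{homo}, by exhibiting, in each case, a good homomorphism whose kernel, image, or preimage identifies $P\cap K$, $P/Q$, or $P$ with the object $\theta(P_1)$ or $\theta^{-1}(P_2)$ appearing in Theorem \ref{homo}. No new combinatorial argument on $x^2-y^2$ should be needed; every such step has already been done once and for all in the proof of Theorem \ref{homo}.

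For (i), I would take $\theta=\iota\colon K\hookrightarrow H$ to be the inclusion. Since $K$ is a subhyperring of $H$, $\iota$ is an injective good homomorphism, and $\iota^{-1}(P)=P\cap K$. Part (ii) of Theorem \ref{homo} then delivers the conclusion immediately. For (ii), I would take $\theta=\pi\colon H\to H/Q$ to be the canonical projection $\pi(x)=x+Q$, which is a surjective good homomorphism with $\ker\pi=Q\subseteq P$. Part (iii) of Theorem \ref{homo} applied to $P_1=P$ then yields that $\pi(P)=P/Q$ is sdf-absorbing in $H/Q$. For (iii), the forward direction is exactly (ii); for the reverse direction, I would again use the same projection $\pi$, note that $\pi^{-1}(P/Q)=P$ and that $P/Q$ is a \emph{nonzero} hyperideal of $H/Q$ because $Q\subsetneq P$, and then apply part (i) of Theorem \ref{homo}.

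The main obstacle I anticipate is verifying that the strong $\mathcal{C}$-hyperideal hypothesis required by Theorem \ref{homo} is satisfied in each situation: namely that $P\cap K$ inherits the strong $\mathcal{C}$-hyperideal property in $K$ from $P$ in $H$, and that $P/Q$ inherits it in $H/Q$. These checks are routine but should be stated, because Theorem \ref{homo} is formulated with that hypothesis on both sides. A secondary point worth spelling out is the need, in the reverse direction of (iii), for $P/Q$ to be nonzero in $H/Q$, which is precisely why the statement requires $Q\subsetneq P$ rather than merely $Q\subseteq P$. Once these preliminaries are recorded, the argument in each part reduces to a single invocation of the appropriate clause of Theorem \ref{homo}.
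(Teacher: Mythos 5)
Your proposal is correct and is exactly the route the paper intends: the paper states this theorem ``without proof as a result of Theorem \ref{homo},'' and your choices of the inclusion $K\hookrightarrow H$ for (i), the projection $H\to H/Q$ for (ii), and the combination of (ii) with the nonzero-preimage clause of Theorem \ref{homo} for (iii) are precisely the intended instantiations. Your remarks about checking the strong $\mathcal{C}$-hyperideal hypothesis and the role of $Q\subsetneq P$ in making $P/Q$ nonzero are apt and would only improve on the paper's omitted argument.
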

Recall from \cite{ameri} that the hyperideals $P$ and $Q$ of $H$ are coprime if $P+Q=H$.
\begin{theorem} \label{11}
Assume that  $P_1,\cdots,P_n$ are coprime strong $\mathcal{C}$-hyperideals of  $H$. Then $P_1 \cap \cdots \cap P_n$ is an sdf-absorbing  hyperideal of $H$ if and only if there is at most one $1 \leq i \leq n$ such that $H/P_i$ is not of characteristis $2$.
\end{theorem}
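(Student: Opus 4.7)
The plan is to leverage the preceding theorem together with the structural advantage coprimality provides. I would first verify that coprimality supplies the non-trivial intersection hypothesis of the preceding theorem: since $P_i + P_j = H$ for $i \ne j$, a standard induction gives $P_i + \bigcap_{j \ne i} P_j = H$, hence $\bigcap_{j \ne i} P_j \not\subseteq P_i$ and so $\bigcap_{j \ne i} P_j \neq \bigcap_j P_j$ for every $i$.

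With this, the forward direction becomes the forward direction of the preceding theorem: if both $H/P_1$ and $H/P_2$ failed to be of characteristic $2$, coprimality would allow me to pick $x \in \bigcap_{k \ge 2} P_k \setminus P_1$ and $y \in P_1 \setminus \bigcap_{k \ge 2} P_k$, set $a = x - y$ and $b = x + y$, check $a^2 - b^2 \subseteq P$, and derive the contradiction that neither $a + b = 2x$ nor $a - b = -2y$ lies in $P$.

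For the backward direction I would adapt the proof of Theorem \ref{9}. After reordering, assume $1 + 1 \in P_i$ for every $i \ge 2$. Given $0 \ne x, y \in H$ with $x^2 - y^2 \subseteq P$, fix $t \in x^2$, $s \in y^2$, $r \in x \circ y$. For $i \ge 2$ the strong $\mathcal{C}$-property combined with $1 + 1 \in P_i$ forces $2s, 2r \in P_i$, whence $t + s \pm 2r = (t - s) + 2s \pm 2r \in P_i$, and strong $\mathcal{C}$ promotes this to $(x \pm y)^2 \subseteq x^2 \pm 2(x \circ y) + y^2 \subseteq P_i$. Applying the sdf-absorbing property of $P_i$ together with Theorem \ref{3} then yields both $x - y \in P_i$ and $x + y \in P_i$ for every $i \ge 2$, while the sdf-absorbing property of $P_1$ supplies $x - y \in P_1$ or $x + y \in P_1$. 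One of $x \pm y$ therefore lies in every $P_i$ and hence in $P$.

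The main obstacle I anticipate is promoting $(x \pm y)^2 \subseteq P_i$ to $x \pm y \in P_i$, which demands $rad(P_i) = P_i$ — equivalently, sdf-absorbing-ness of each individual $P_i$. In the present framework this is delivered by the tacit primality of the coprime strong $\mathcal{C}$-hyperideals, since, as already noted in the introduction, every nonzero prime strong $\mathcal{C}$-hyperideal is sdf-absorbing. Making this primality hypothesis explicit seems necessary to close the argument cleanly.
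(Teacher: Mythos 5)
Your forward direction does not prove the theorem as stated: you import the argument of the preceding (unnumbered) theorem, whose key step is that $x\notin P_1$ and $1+1\notin P_1$ force $x\circ(1+1)\nsubseteq P_1$ and hence $a+b=2x\notin P_1$. That inference is exactly primality of $P_1$ (and likewise of $P_2$ for $a-b=-2y$), and Theorem \ref{11} assumes only that the $P_i$ are coprime strong $\mathcal{C}$-hyperideals, not that they are prime. You notice this yourself and propose adding the primality hypothesis, but that changes the statement being proved. The paper closes the forward direction by a different route that needs no primality: by Theorem \ref{10}(ii) the zero hyperideal of $H/P$ is sdf-absorbing, by the Chinese-remainder isomorphism for coprime hyperideals (Corollary 3.17 of the cited reference) $H/P\cong \Pi_{i=1}^n H/P_i$, and then the explicit elements $x=(1,1,\dots,1)$, $y=(-1,1,\dots,1)$ satisfy $x^2-y^2\subseteq\{(0,\dots,0)\}$ while $x-y=(1+1,0,\dots,0)$ and $x+y=(0,1+1,\dots,1+1)$ are both nonzero precisely because $1+1\neq 0$ in $H/P_1$ and in $H/P_2$. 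Your verification that coprimality yields $\bigcap_{j\neq i}P_j\neq\bigcap_j P_j$ is correct but, on the paper's route, unnecessary.

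Your backward direction has the analogous problem: the promotion of $(x\pm y)^2\subseteq P_i$ to $x\pm y\in P_i$ requires $rad(P_i)=P_i$, and your appeal to ``the sdf-absorbing property of $P_1$'' presupposes that each $P_i$ is individually sdf-absorbing (or prime) — again not among the hypotheses. In fairness, the paper itself only gestures at Theorem \ref{9} for this implication and inherits the same weakness, but your write-up makes the reliance on unavailable hypotheses explicit rather than resolving it. As it stands, your argument establishes the theorem only under an added assumption that the $P_i$ are prime, which is the preceding theorem, not Theorem \ref{11}.
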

\begin{proof}
$\Longrightarrow$ Put $P=P_1 \cap \cdots \cap P_n$. Suppose that $P$ is an sdf-absorbing  hyperideal of $H$ for $n \geq 2$ and  neither  $H/P_1$ nor $H/P_2$ are of characteristic $2$. By Theorem \ref{10} (ii), we conclude that $P/P$ is  an sdf-absorbing  hyperideal of $H/P$. By Corollary 3.17 in \cite{ameri}, we have $H/P \cong \Pi_{i=1}^n H/P_i$. Assume that $x=(1,1,\cdots,1)$ and $y=(-1,1,\cdots,1)$. Therefore we have $x^2-y^2 \subseteq \{(0,\cdots,0)\}$ and neither $x-y=(1+1,0,\cdots,0)$ nor $x+y=(0,1+1,\cdots,1+1)$ are in $\{(0,\cdots,0)\}$. This shows that $\{(0,\cdots,0)\}$ is not an sdf-absorbing  hyperideal of $\Pi_{i=1}^n H/P_i$ which is impossible. Consequently, there is at most one $1 \leq i \leq n$ such that $H/P_i$ is not of characteristis $2$.

$\Longleftarrow$ It can be easily seen that the claim is true in a similar manner to the proof of Theorem \ref{9}.

\end{proof}
Let  $H$ be a multiplicative hyperring. Then the set of all hypermatrices of $H$ is designated by $M_m(H)$. Assume that $A = (A_{ij})_{m \times m}$ and $ B = (B_{ij})_{m \times m}$ are in $ P^\star (M_m(H))$. Then $A \subseteq B$ if and only if $A_{ij} \subseteq B_{ij}$\cite{ameri}. 
\begin{theorem} 
Suppose that $P$ is a proper hyperideal of $H$. If $M_m(P)$ is an sdf-absorbing  hyperideal of $M_m(H)$, then $P$ is  an sdf-absorbing  hyperideal of $H$. 
\end{theorem}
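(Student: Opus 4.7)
The plan is to prove the contrapositive in the obvious way: take an sdf-witness $x,y \in H$ violating the condition for $P$ and lift it into $M_m(H)$ to produce a violation of the sdf-absorbing property for $M_m(P)$. Concretely, I would assume that $P$ is an sdf-absorbing hyperideal is to be shown directly: starting from $0 \neq x,y \in H$ with $x^2 - y^2 \subseteq P$, I want to conclude $x-y \in P$ or $x+y \in P$ using the hypothesis on $M_m(P)$.

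The key construction is to place $x$ and $y$ in a single entry. Let $X \in M_m(H)$ be the hypermatrix whose $(1,1)$-entry is $x$ and whose remaining entries are $0$, and define $Y$ analogously from $y$. Since $x \neq 0$ and $y \neq 0$, both $X$ and $Y$ are nonzero elements of $M_m(H)$. Computing the hypermatrix products entrywise, the $(1,1)$-entry of $X^2$ is $x \circ x = x^2$ and all other entries are $\{0\}$; the analogous statement holds for $Y^2$. Hence $X^2 - Y^2$ is the hypermatrix whose $(1,1)$-entry is $x^2 - y^2 \subseteq P$ and whose remaining entries are $\{0\} \subseteq P$, so by the entrywise criterion $X^2 - Y^2 \subseteq M_m(P)$.

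Applying the sdf-absorbing hypothesis on $M_m(P)$ to the nonzero matrices $X, Y$, I obtain $X - Y \in M_m(P)$ or $X + Y \in M_m(P)$. Reading off the $(1,1)$-entry in each case gives $x - y \in P$ or $x + y \in P$, which is exactly what is required to conclude that $P$ is sdf-absorbing.

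I expect this argument to be almost bookkeeping; the only mildly delicate point is confirming that in the multiplicative hyperring $M_m(H)$ of hypermatrices, the $(1,1)$-entry of $X^2$ really is the singleton-set contribution $x \circ x$ with no spurious additions, and that the entrywise inclusion $X^2 - Y^2 \subseteq M_m(P)$ follows from the definition of $\subseteq$ on $P^\star(M_m(H))$ recalled just before the statement. Once this is checked, the rest of the proof is immediate, so I anticipate no genuine obstacle.
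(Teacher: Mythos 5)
Your proposal is correct and is essentially identical to the paper's own proof: both embed $x$ and $y$ into the $(1,1)$-entries of otherwise-zero hypermatrices, observe that $X^2-Y^2\subseteq M_m(P)$, apply the sdf-absorbing hypothesis on $M_m(P)$, and read off the $(1,1)$-entry of $X-Y$ or $X+Y$. No substantive differences to report.
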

\begin{proof}
Let $x^2- y^2 \subseteq P$ for  $0 \neq x,y \in H$. Then we get
\[\begin{pmatrix}
x^2- y^2 & 0 & \cdots & 0 \\
0 & 0 & \cdots & 0 \\
\vdots & \vdots & \ddots \vdots \\
0 & 0 & \cdots & 0 
\end{pmatrix}
\subseteq M_m(P).\]
Since $M_m(P)$ is an sdf-absorbing  hyperideal of  $M_m(H)$ and 
\[ \begin{pmatrix}
x^2- y^2 & 0 & \cdots & 0\\
0 & 0 & \cdots & 0\\
\vdots & \vdots & \ddots \vdots\\
0 & 0 & \cdots & 0
\end{pmatrix}
=
\begin{pmatrix}
x & 0 & \cdots & 0\\
0 & 0 & \cdots & 0\\
\vdots & \vdots & \ddots \vdots\\
0 & 0 & \cdots & 0
\end{pmatrix}^2
-
\begin{pmatrix}
y & 0 & \cdots & 0\\
0 & 0 & \cdots & 0\\
\vdots & \vdots & \ddots \vdots\\
0 & 0 & \cdots & 0
\end{pmatrix}^2
\]
we get the result that 
\[ \begin{pmatrix}
x & 0 & \cdots & 0\\
0 & 0 & \cdots & 0\\
\vdots& \vdots & \ddots \vdots\\
0 & 0 & \cdots & 0
\end{pmatrix} 
-
\begin{pmatrix}
y & 0 & \cdots & 0\\
0 & 0 & \cdots & 0\\
\vdots& \vdots & \ddots \vdots\\
0 & 0 & \cdots & 0
\end{pmatrix}=
\begin{pmatrix}
x-y& 0 & \cdots & 0\\
0 & 0 & \cdots & 0\\
\vdots& \vdots & \ddots \vdots\\
0 & 0 & \cdots & 0
\end{pmatrix}
\in M_m(P)\]
or 
\[ \begin{pmatrix}
x & 0 & \cdots & 0\\
0 & 0 & \cdots & 0\\
\vdots& \vdots & \ddots \vdots\\
0 & 0 & \cdots & 0
\end{pmatrix} 
+
\begin{pmatrix}
y & 0 & \cdots & 0\\
0 & 0 & \cdots & 0\\
\vdots& \vdots & \ddots \vdots\\
0 & 0 & \cdots & 0
\end{pmatrix}=
\begin{pmatrix}
x+y& 0 & \cdots & 0\\
0 & 0 & \cdots & 0\\
\vdots& \vdots & \ddots \vdots\\
0 & 0 & \cdots & 0
\end{pmatrix}
\in M_m(P).\]

This implies that $x-y \in  P$ or  $x+y \in  P$. Consequently, $P$ is an sdf-absorbing  hyperideal of $H$.
\end{proof}
\begin{theorem}\label{12}
Assume that $P$ is a nonzero sdf-absorbing strong $\mathcal{C}$-hyperideal of $H$ such that $1+1 \in U(H)$. Then $P$ is a prime hyperideal of $H$.
\end{theorem}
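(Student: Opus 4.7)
The plan is to invoke the characterization of sdf-absorbing hyperideals supplied by Theorem \ref{4}. Suppose $x \circ y \subseteq P$; I must show $x \in P$ or $y \in P$. If $x = 0$ or $y = 0$ that element already lies in $P$, so assume $x, y \neq 0$. Argue by contradiction: suppose further that $x, y \in H \backslash P$. Since $P$ is a proper strong $\mathcal{C}$-hyperideal and sdf-absorbing, Theorem \ref{4} states that no pair $(a,b)$ with $0 \neq a, b \in H$ can simultaneously satisfy $a - b = x$ and $a + b = y$. The heart of the proof is to produce exactly such a pair using $1+1 \in U(H)$, contradicting this.

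To construct $(a,b)$, fix $v \in H$ with $1 \in (1+1) \circ v$, so that $v$ plays the role of a multiplicative ``half''. Mimicking the classical identity $a = (x+y)/2$, I would select $a$ so that $a + a = x + y$ and then set $b = y - a$; direct algebra then yields $a + b = y$ and $a - b = 2a - y = (x+y) - y = x$. The existence of $a$ is extracted from the chain $x + y \in (x+y) \circ 1 \subseteq (x+y) \circ ((1+1) \circ v) \subseteq (x+y) \circ v + (x+y) \circ v$, which decomposes $x + y$ into a sum of two elements of $(x+y) \circ v$; the crucial step is to promote this pair of summands to a single $a \in (x+y) \circ v$ with $a + a = x + y$.

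The remaining degeneracies are $a = 0$ and $b = 0$. If $a = 0$, then $b = -x$ and $b = y$ force $y = -x$, hence $x \circ y = -x^2 \subseteq P$ and so $x^2 \subseteq P$; since $P$ is a nonzero sdf-absorbing $\mathcal{C}$-hyperideal, Theorem \ref{1} gives $x \in rad(P) = P$, contradicting $x \notin P$. The case $b = 0$ is symmetric and yields $y \in P$. The main obstacle I expect is precisely this hyperring construction of a single element $a$ with $a + a = x + y$: the weak distributivity $(u + u) \circ v \subseteq u \circ v + u \circ v$ provides only two possibly distinct summands in the decomposition, so one must exploit the unit relation $1 \in (1+1) \circ v$ together with the strong $\mathcal{C}$-hyperideal structure to coerce the summands into coinciding, or otherwise rearrange the construction so that the resulting $(a,b)$ still contradicts Theorem \ref{4}.
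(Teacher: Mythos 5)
There is a genuine gap, and you have located it yourself: the construction of a single element $a$ with $a+a=x+y$. In a multiplicative hyperring the relation $1\in(1+1)\circ v$ only yields $x+y\in (x+y)\circ 1\subseteq (x+y)\circ\bigl((1+1)\circ v\bigr)\subseteq (x+y)\circ v+(x+y)\circ v$, i.e.\ a decomposition $x+y=s+t$ with $s,t\in(x+y)\circ v$ that may well be distinct; nothing in the axioms, nor in the strong $\mathcal{C}$-hyperideal hypothesis, forces $s=t$. Since Theorem \ref{4} requires \emph{exact} single-element equations $a-b=x$ and $a+b=y$, your route through that characterization cannot be completed as written, and the whole contradiction you aim for never materializes. (Your handling of the degenerate cases $a=0$, $b=0$ via Theorem \ref{1} is fine, but it presupposes the pair exists.)

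The paper's proof avoids this obstacle precisely by not insisting on exact halves. Given $a\circ b\subseteq P$ with $b\neq\pm a$, it picks \emph{arbitrary} elements $x\in a\circ(1+1)^{-1}+b\circ(1+1)^{-1}$ and $y\in a\circ(1+1)^{-1}-b\circ(1+1)^{-1}$, checks $x^2-y^2\subseteq P$ (the square terms cancel and the cross terms land in $P$ because $a\circ b\subseteq P$), and applies the sdf-absorbing definition directly to get $x-y\in P$ or $x+y\in P$. The decisive step is then the strong $\mathcal{C}$-hyperideal property: since $x-y$ lies in the set $b\circ(1+1)^{-1}+b\circ(1+1)^{-1}$ and one element of that set meets $P$, the \emph{entire} set is contained in $P$; and that set also contains $b$ itself, because $b\in b\circ 1\subseteq b\circ(1+1)^{-1}\circ(1+1)\subseteq b\circ(1+1)^{-1}+b\circ(1+1)^{-1}$. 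So membership is propagated at the level of hyperproduct-sum sets rather than forced onto a single ``half'' element. If you want to salvage your argument, replace the appeal to Theorem \ref{4} by this direct use of the definition together with the set-spreading property of strong $\mathcal{C}$-hyperideals; the case $y=\pm x$ is then handled exactly as you propose, via Theorem \ref{1}.
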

\begin{proof}
Suppose that $a\circ b \in P$ for $0 \neq a, b \in H$. Let us assume $b \neq a,-a$. Take any  $x \in a\circ (1+1)^{-1}+b\circ (1+1)^{-1}$ and $y \in a\circ (1+1)^{-1}-b\circ (1+1)^{-1}$. Therefore $x^2-y^2 \subseteq (a\circ (1+1)^{-1}+b\circ (1+1)^{-1})^2-(a\circ (1+1)^{-1}-b\circ (1+1)^{-1})^2 \subseteq a^2 \circ{(1+1)^{-1}}^2+a \circ b \circ{(1+1)^{-1}}^2+a \circ b \circ{(1+1)^{-1}}^2+b^2\circ{(1+1)^{-1}}^2-a^2 \circ{(1+1)^{-1}}^2+a \circ b \circ{(1+1)^{-1}}^2+a \circ b \circ{(1+1)^{-1}}^2-b^2\circ{(1+1)^{-1}}^2 \subseteq P$. Since $P$ is an sdf-absorbing hyperideal of $H$, we get $x-y \in P$ or $x+y \in P$. In the first case, we get $ b\circ (1+1)^{-1}+b\circ (1+1)^{-1} \subseteq P$ and so $b \in b \circ 1 \subseteq b \circ (1+1)^{-1} \circ (1+1) \subseteq b \circ (1+1)^{-1} \circ 1 +b \circ (1+1)^{-1} \circ 1 \subseteq P$. In the second case, we obtain  $ a\circ (1+1)^{-1}+a\circ (1+1)^{-1} \subseteq P$ which implies $a \in a \circ 1 \subseteq a \circ (1+1)^{-1} \circ (1+1) \subseteq a \circ (1+1)^{-1} \circ 1 +a \circ (1+1)^{-1} \circ 1 \subseteq P$. Now, let $b=a$ or $b=-a$. So we have $a^2 \subseteq P=rad(P)$ by Theorem \ref{1}. It follows that $a \in P$. Consequently, $P$ is a prime hyperideal of $H$.
\end{proof}
For given commutative multiplicative hyperrings $(H_1,+_1,\circ_1)$ and $(H_2,+_2,\circ_2)$   with nonzero identities, the triple  $(H_1 \times H_2,+,\circ) $ is a commutative multiplicative hyperring where

$(x_1,x_2)+(y_1,y_2)=(x_1+_1y_1,x_2+_2y_2)$

$(x_1,x_2) \circ (y_1,y_2)=\{(x,y) \in H_1 \times H_2 \ \vert \ x \in x_1 \circ_1 y_1, y \in x_2 \circ_2 y_2\}$ \\
for all  $x_1, y_1 \in H_1$ and $x_2, y_2 \in H_2$ \cite{ul}. Now, we give some characterizations of weakly sdf-absorbing hyperideals on cartesian product of commutative multiplicative hyperring.
\begin{theorem} \label{13}
Assume that $P_1$ and $P_2$ are nonzero proper strong $\mathcal{C}$-hyperideals of commutative multiplicative hyperrings $(H_1,+_1,\circ_1)$ and $(H_2,+_2,\circ_2)$, respectively. Then $P_1 \times P_2$ is an sdf-absorbing hyperideal of $H_1 \times H_2$ if and only if $P_1$ and $P_2$ are sdf-absorbing hyperideals of $H_1$ and $H_2$, respectively, and $1_{H_1}+1_{H_1} \in P_1$ or $1_{H_2}+1_{H_2} \in P_2$.
\end{theorem}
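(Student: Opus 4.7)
The plan is to prove both directions, using the strong $\mathcal{C}$-hyperideal property to control the multi-valuedness of the hyperoperation, and invoking Theorems \ref{1} and \ref{3} at the critical junctures.

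For the backward direction, I would assume $P_1, P_2$ are sdf-absorbing and, without loss of generality, $1_{H_1}+1_{H_1} \in P_1$. Given nonzero $x = (a_1,a_2)$, $y = (b_1,b_2)$ with $x^2 - y^2 \subseteq P_1 \times P_2$, I would first observe that $x^2 - y^2 = (a_1^2 - b_1^2) \times (a_2^2 - b_2^2)$, which forces $a_i^2 - b_i^2 \subseteq P_i$ for $i=1,2$. Then I would split into cases according to which of $a_i, b_i$ vanish. When $a_1, b_1 \neq 0$, Theorem \ref{3} applied to $P_1$ (using $1+1 \in P_1$) gives both $a_1 - b_1 \in P_1$ and $a_1 + b_1 \in P_1$, so the choice between $x-y$ and $x+y$ is dictated by whichever of $a_2 \pm b_2$ lies in $P_2$ via the sdf-absorbing property of $P_2$ when $a_2, b_2 \neq 0$. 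When some $a_i$ or $b_i$ is zero, the inclusion $a_i^2 - b_i^2 \subseteq P_i$ collapses to $c^2 \subseteq P_i$ for the surviving element, whereupon Theorem \ref{1} gives $c \in rad(P_i) = P_i$, and the conclusion follows directly.

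For the forward direction, I would proceed in two stages. To prove each $P_i$ is sdf-absorbing, I would fix a nonzero $p_2 \in P_2$ and, given nonzero $a, b \in H_1$ with $a^2 - b^2 \subseteq P_1$, take $x = (a, p_2)$, $y = (b, p_2)$. The key is that $x^2 - y^2 = (a^2 - b^2) \times (p_2^2 - p_2^2)$, and the second factor, which belongs to $\mathfrak{C}$ and contains $0$, is forced inside $P_2$ by the strong $\mathcal{C}$-hyperideal property; hence $x^2 - y^2 \subseteq P_1 \times P_2$, and the sdf-absorbing hypothesis on the product then gives $a - b \in P_1$ or $a + b \in P_1$. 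To establish the second conclusion, I would test the pair $x = (1_{H_1}, 1_{H_2})$ and $y = (-1_{H_1}, 1_{H_2})$: since $x^2 = y^2 = (1 \circ_1 1) \times (1 \circ_2 1)$, the difference $x^2 - y^2$ is contained in $((1 \circ_1 1) - (1 \circ_1 1)) \times ((1 \circ_2 1) - (1 \circ_2 1))$, and again the strong $\mathcal{C}$-hyperideal property (with $0$ lying in each factor) places it inside $P_1 \times P_2$; the sdf-absorbing hypothesis then forces $x - y = (1_{H_1}+1_{H_1}, 0)$ or $x + y = (0, 1_{H_2}+1_{H_2})$ into $P_1 \times P_2$, yielding the required alternative.

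The main obstacle is the forward direction's need to upgrade $0 \in (x^2 - y^2) \cap (P_1 \times P_2)$ to the full inclusion $x^2 - y^2 \subseteq P_1 \times P_2$, which is not automatic under the multi-valued hyperoperation but falls out of the strong $\mathcal{C}$-hyperideal property once the difference is recognized as an element of $\mathfrak{C}$. The backward direction's bookkeeping is primarily routine modulo the zero-component cases, which are handled by the radical-equals-itself consequence of Theorem \ref{1}.
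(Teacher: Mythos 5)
Your proposal is correct and follows essentially the same route as the paper: the same product decomposition $x^2-y^2=(a_1^2-b_1^2)\times(a_2^2-b_2^2)$, the same test pair $(1_{H_1},1_{H_2})$, $(-1_{H_1},1_{H_2})$ for the necessity of $1+1\in P_1$ or $1+1\in P_2$, and the same appeal to Theorem \ref{3} in the backward direction (your choice of $(a,p_2),(b,p_2)$ in place of the paper's $(a,0),(b,0)$ is immaterial). In fact you are slightly more careful than the paper, which silently assumes all components are nonzero in the backward direction; your use of Theorem \ref{1} to dispose of the zero-component cases closes that gap.
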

\begin{proof}
$\Longrightarrow$ Let $x_1^2-y_1^2 \subseteq P_1$ for $x_1,y_1 \in H_1$. Then we have $(x_1,0)^2-(y_1,0)^2 =(x_1^2-y_1^2,0^2-0^2) \subseteq P_1 \times P_2$. Since $P_1 \times P_2$ is an sdf-absorbing hyperideal of $H_1 \times H_2$, we get $(x_1-y_1,0)=(x_1,0)-(y_1,0) \in P_1 \times P_2$ or $(x_1+y_1,0)=(x_1,0)+(y_1,0) \in P_1 \times P_2$. Therefore we have $x_1-y_1 \in P_1$ or $x_1+y_1 \in P_1$. Thus $P_1$ is an sdf-absorbing hyperideal of $H_1$. Similarly, we can see that $P_2$ is an sdf-absorbing hyperideal of $H_2$. Now, put $x=(1_{H_1},1_{H_2}), y=(1_{H_1},-1_{H_2})$. Therefore we have $x^2-y^2 \subseteq P_1 \times P_2$. By the hypothesis, we obtain $(1_{H_1}+1_{H_1},0)=x+y \in P_1 \times P_2$ or $(0,1_{H_2}+1_{H_2})=x-y \in P_1 \times P_2$. This implies that $1_{H_1}+1_{H_1} \in P_1$ or $1_{H_2}+1_{H_2} \in P_2$.

$\Longleftarrow$ Assume that $(x_1,x_2)^2 -(y_1,y_2)^2 \subseteq P_1 \times P_2$ for $(0,0) \neq (x_1,x_2), (y_1,y_2) \in H_1 \times H_2$. Let us assume that $1_{H_1}+1_{H_1} \in P_1$. Therefore we get $x_1^2-y_1^2 \subseteq P_1$. Since $P_1$ is an  sdf-absorbing hyperideal of $H_1$ and $1_{H_1}+1_{H_1} \in P_1$, we conclude that $x_1-y_1 \in P_1$ and $x_1+y_1 \in P_1$ by Theorem \ref{3}. On the other hand, from $x_2^2-y_2^2 \subseteq P_2$, we have $x_2-y_2 \in P_2$ or $x_2+y_2 \in P_2$. In the first possibility, we get $(x_1,x_2)-(y_1,y_2)=(x_1-y_1,x_2-y_2) \in P_1 \times P_2$. In the second possibility, we obtain $(x_1,x_2)+(y_1,y_2)=(x_1+y_1,x_2+y_2) \in P_1 \times P_2$. Consequently, $P_1 \times P_2$ is an sdf-absorbing hyperideal of $H_1 \times H_2$.
\end{proof}

\begin{theorem} \label{14}
Assume that $P_1$ and $P_2$ are nonzero proper strong $\mathcal{C}$-hyperideals of commutative multiplicative hyperrings $(H_1,+_1,\circ_1)$ and $(H_2,+_2,\circ_2)$, respectively. Then $P_1$ is an sdf-absorbing hyperideal of $H_1$  if and only if  $P_1 \times H_2$ is an sdf-absorbing hyperideal of $H_1 \times H_2$.
\end{theorem}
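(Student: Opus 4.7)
The plan is to reduce the product-ring question to a one-component question by padding elements of $H_1$ with the multiplicative identity of $H_2$, in the same spirit as the proof of Theorem \ref{13}. The backward implication will be essentially immediate; the forward direction will force us to confront the case where one component of the given pair is zero, which is where Theorem \ref{1} enters.

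For the backward direction, suppose $P_1 \times H_2$ is an sdf-absorbing hyperideal of $H_1 \times H_2$ and take $0 \neq x_1, y_1 \in H_1$ with $x_1^2 - y_1^2 \subseteq P_1$. Then $(x_1, 1_{H_2})$ and $(y_1, 1_{H_2})$ are nonzero in $H_1 \times H_2$, and
\[ (x_1, 1_{H_2})^2 - (y_1, 1_{H_2})^2 \subseteq (x_1^2 - y_1^2) \times H_2 \subseteq P_1 \times H_2. \]
Invoking the sdf-absorbing property of $P_1 \times H_2$ and reading off first coordinates yields $x_1 - y_1 \in P_1$ or $x_1 + y_1 \in P_1$, as required.

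For the forward direction, suppose $P_1$ is sdf-absorbing in $H_1$ and take nonzero pairs $(x_1, x_2), (y_1, y_2) \in H_1 \times H_2$ with $(x_1, x_2)^2 - (y_1, y_2)^2 \subseteq P_1 \times H_2$. Projecting to the first factor gives $x_1^2 - y_1^2 \subseteq P_1$. If both $x_1$ and $y_1$ are nonzero, sdf-absorbing yields $x_1 - y_1 \in P_1$ or $x_1 + y_1 \in P_1$, and pairing with the corresponding $x_2 \mp y_2 \in H_2$ (automatic) gives $(x_1, x_2) \pm (y_1, y_2) \in P_1 \times H_2$. When, say, $y_1 = 0$ (with $x_1$ possibly also zero), the relation degenerates to $x_1^2 \subseteq P_1$. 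Since $P_1$ is a nonzero sdf-absorbing $\mathcal{C}$-hyperideal (every strong $\mathcal{C}$-hyperideal is a $\mathcal{C}$-hyperideal), Theorem \ref{1} gives $rad(P_1) = P_1$, so $x_1 \in P_1$; then $x_1 - y_1 = x_1 \in P_1$ and $x_1 + y_1 = x_1 \in P_1$, and we conclude as before. The symmetric case $x_1 = 0$ is identical, and $x_1 = y_1 = 0$ is trivial.

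The main obstacle is precisely this degenerate case in the forward direction: the definition of sdf-absorbing requires \emph{both} $x$ and $y$ to be nonzero, so when one of $x_1, y_1$ vanishes we cannot apply it directly and must instead appeal to the radical identity $rad(P_1) = P_1$ from Theorem \ref{1}. This explains why the hypothesis that $P_1$ be a nonzero strong $\mathcal{C}$-hyperideal, rather than a mere sdf-absorbing hyperideal, is essential in the statement.
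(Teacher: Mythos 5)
Your proof is correct and follows the same componentwise padding/projection strategy as the proof of Theorem \ref{13}, which is exactly what the paper intends here (its proof of this statement is literally ``proved in a similar way to Theorem \ref{13}''). The one substantive addition is your treatment of the degenerate case $x_1=0$ or $y_1=0$ in the forward direction via $rad(P_1)=P_1$ from Theorem \ref{1} (using that a strong $\mathcal{C}$-hyperideal is a $\mathcal{C}$-hyperideal); this is a genuine gap in the paper's one-line proof that your argument correctly closes, and it is where the ``nonzero'' hypothesis on $P_1$ is actually needed.
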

\begin{proof}
It is proved in a similar way to Theorem \ref{13}.
\end{proof}
\begin{theorem} \label{15}
Let $(H_1,+_1,\circ_1)$ and $(H_2,+_2,\circ_2)$  be commutative multiplicative hyperrings such that $\{0\} \times H_2$ is a strong $\mathcal{C}$-hyperideal of $H_1 \times H_2$. Then  $\{0\}$ is an sdf-absorbing hyperideal of $H_1$ and the set of all nilpotent elements of $H_1$ is equal to $\{0\}$ if and only if $\{0\} \times H_2$ is an sdf-absorbing hyperideal of $H_1 \times H_2$. 
\end{theorem}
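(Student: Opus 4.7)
The plan is to prove both directions by projecting onto the first coordinate of $H_1 \times H_2$ and applying the sdf-absorbing property on each factor appropriately.

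For the forward direction, assume $\{0\}$ is sdf-absorbing in $H_1$ and the set of nilpotent elements of $H_1$ is $\{0\}$. Take nonzero $(x_1, x_2), (y_1, y_2) \in H_1 \times H_2$ with $(x_1,x_2)^2 - (y_1,y_2)^2 \subseteq \{0\} \times H_2$. Reading the first coordinate yields $x_1^2 - y_1^2 \subseteq \{0\}$. I would then case-split: if both $x_1$ and $y_1$ are nonzero, the sdf-absorbing hypothesis on $\{0\}$ in $H_1$ gives $x_1 = \pm y_1$, so the corresponding pair $(x_1 \mp y_1, x_2 \mp y_2)$ has zero first coordinate and lies in $\{0\} \times H_2$. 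If exactly one of $x_1, y_1$ is zero, the first-coordinate relation forces the other to satisfy $y_1^2 \subseteq \{0\}$, making it nilpotent and hence zero by hypothesis---a contradiction. The case $x_1 = y_1 = 0$ is immediate.

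For the converse, assume $\{0\} \times H_2$ is sdf-absorbing. To show $\{0\}$ is sdf-absorbing in $H_1$, I lift: given nonzero $x_1, y_1 \in H_1$ with $x_1^2 - y_1^2 \subseteq \{0\}$, the elements $(x_1, 0), (y_1, 0)$ are nonzero in $H_1 \times H_2$ and satisfy $(x_1,0)^2 - (y_1,0)^2 \subseteq \{0\} \times H_2$; the sdf-absorbing hypothesis then produces $x_1 - y_1 \in \{0\}$ or $x_1 + y_1 \in \{0\}$. To show $H_1$ has no nonzero nilpotent, I use Theorem \ref{1}: since $\{0\} \times H_2$ is a nonzero sdf-absorbing $\mathcal{C}$-hyperideal (via the strong $\mathcal{C}$-hyperideal assumption, with $(0, 1_{H_2})$ a witness to nonzeroness), we have $rad(\{0\} \times H_2) = \{0\} \times H_2$. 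For a nilpotent $z \in H_1$ with $0 \in z^n$, the set $(z, 0)^n$ contains $(0, 0)$, so it meets $\{0\} \times H_2$; the $\mathcal{C}$-hyperideal property then upgrades this to $(z, 0)^n \subseteq \{0\} \times H_2$, placing $(z, 0) \in D(\{0\} \times H_2) \subseteq rad(\{0\} \times H_2) = \{0\} \times H_2$, so $z = 0$.

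The main obstacle I expect is the nilpotency conclusion in the converse direction. From the raw datum $0 \in z^n$ we obtain only the nonempty intersection $(z,0)^n \cap (\{0\} \times H_2) \neq \emptyset$, which is strictly weaker than the containment needed to invoke $D(\cdot)$. The strong $\mathcal{C}$-hyperideal hypothesis is exactly what promotes intersection to containment, so it is load-bearing here; without it one cannot close the chain leading to $(z, 0) \in \{0\} \times H_2$. The other subtle points---that first-coordinate-zero elements $(x_1, 0)$ are nonzero in $H_1 \times H_2$ whenever $x_1 \neq 0$, and that the nilpotent hypothesis on $H_1$ is exactly the obstruction preventing the ``mixed zero'' case in the forward direction---are routine once flagged.
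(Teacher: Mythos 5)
Your proof is correct and follows the same overall strategy as the paper: project to the first coordinate for the forward direction, using the no-nonzero-nilpotent hypothesis to dispose of the degenerate cases where $x_1$ or $y_1$ vanishes (your explicit three-way case split is in fact more careful than the paper's), and lift pairs $(x_1,0),(y_1,0)$ for the first half of the converse. The one place you genuinely diverge is the nilpotency claim in the converse. The paper argues directly: from $0\in x^n$ it forms the pair $(x,1_{H_2})$, $(0,1_{H_2})$, uses the strong $\mathcal{C}$-hyperideal hypothesis to promote the nonempty intersection of $(x^2-0^2,\,1_{H_2}^2-1_{H_2}^2)$ with $\{0\}\times H_2$ to a containment, and then applies the sdf-absorbing property of $\{0\}\times H_2$ to conclude $x=0$; as written that argument only gets started when $n=2$, since the nonempty intersection requires $0\in x^2$. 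Your route---promote $(z,0)^n\cap(\{0\}\times H_2)\neq\varnothing$ to $(z,0)^n\subseteq\{0\}\times H_2$ via the $\mathcal{C}$-hyperideal property, then invoke $D(\cdot)\subseteq rad(\cdot)$ together with Theorem \ref{1} applied to the nonzero sdf-absorbing $\mathcal{C}$-hyperideal $\{0\}\times H_2$---handles arbitrary $n$ cleanly and is, if anything, tighter than the paper's. Both arguments lean on exactly the same two hypotheses (strong $\mathcal{C}$ and sdf-absorbing), so nothing is lost; your version simply packages the descent from $x^n$ to $x$ inside Theorem \ref{1} instead of leaving it implicit.
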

\begin{proof}
$ \Longrightarrow$ Let $\{0\}$ is an sdf-absorbing hyperideal of $H_1$ and the set of all nilpotent elements of $H_1$ be equal to $\{0\}$. Suppose that $(x_1,x_2)^2 -(y_1,y_2)^2 \subseteq \{0\} \times H_2$ for $(0,0) \neq (x_1,x_2), (y_1,y_2) \in H_1 \times H_2$.  So $x_1^2-y_1^2 \subseteq  \{0\}$. If  $0 \in x_1^2$ and $0 \in y_1^2$, then we get $x_1=0$ and $y_1=0$ as the set of all nilpotent elements of $H_1$ is equal to $\{0\}$, which is impossible. Since $\{0\}$ is an  sdf-absorbing hyperideal, we conclude that $x_1-y_1 =0$ or  $x_1+y_1 =0$. Hence $(0,x_2-y_2)=(x_1,x_2) -(y_1,y_2) \in \{0\} \times H_2$ or $(0,x_2+y_2)=(x_1,x_2) +(y_1,y_2) \in \{0\} \times H_2$. Thus  $\{0\} \times P_2$ is an sdf-absorbing hyperideal of $H_1 \times H_2$.

$\Longleftarrow$ Let $\{0\} \times H_2$ be an sdf-absorbing hyperideal of $H_1 \times H_2$. By Theorem \ref{14}, $\{0\}$ is  an sdf-absorbing hyperideal of $H_1$. Assume that $x \in H_1$ such that $0 \in x^n$ for some $n \in \mathbb{N}$. It is sufficient to show  $x=0$. Since $\{0\} \times H_2$ is a strong $\mathcal{C}$-hyperideal of $H_1 \times H_2$ and $(x^2-0^2,1_{H_1}^2-1_{H_2}^2) \cap \{0\} \times H_2$, we have $(x,1_{H_2})^2-(0,1_{H_2})^2 \subseteq \{0\} \times H_2$. This means that $(x,0)=(x,1_{H_2})-(0,1_{H_2}) \in \{0\} \times H_2$ or $(x,1_{H_2}+1_{H_2})=(x,1_{H_2})+(0,1_{H_2}) \in \{0\} \times H_2$. Hence $x=0$, as needed.
\end{proof}
\section{Weakly $sdf$-absorbing hyperideals}
\begin{definition}
Let $P$ be a proper hyperideal of $H$. $P$ is called  a weakly square-difference factor absorbing   hyperideal (or weakly  sdf-absorbing hyperideal for short) if $0 \neq x, y \in H$ and $0 \notin x^2 - y^2 \subseteq P$ imply $x-y  \in P$ or $x+y \in P$. 
\end{definition}
\begin{example}
Let $H=\{0,1,2,3\}$. Consider the multiplicative hyperring $(H+,\circ)$, where the operation $+$ and the hyperoperation $\circ$ deﬁned by
\[ 
\begin{tabular}{|c|c|c|c|c|} 
\hline $+$ & $0$ & $1$ & $2$ & $3$
\\ \hline $0$ & $0$ & $1$ & $2$ & $3$
\\ \hline$1$ & $1$ & $2$ & $3$ & $0$
\\ \hline $2$ & $2$ & $3$ & $0$ & $1$
\\ \hline$3$ & $3$ & $0$ & $1$&$2$
\\ \hline
\end{tabular}\ \ \ \ \ \ \ \ \
\begin{tabular}{|c|c|c|c|c|} 
\hline $\circ$ & $0$ & $1$ & $2$ & $3$
\\ \hline $0$ & $\{0\}$ & $\{0\}$ & $\{0\}$ & $\{0\}$ 
\\ \hline $1$ & $\{0\}$ & $\{1,3\}$ & $\{2\}$ & $\{1,3\}$ 
\\ \hline$2$ & $\{0\}$ & $\{2\}$ & $\{0\}$ &$\{2\}$ 
\\ \hline $3$ & $\{0\}$ & $\{1,3\}$ & $\{2\}$ & $\{1,3\}$ 
\\ \hline
\end{tabular}\]
In this hyperring, $P=\{0\}$ is a weakly  sdf-absorbing hyperideal. 
\end{example}
\begin{theorem} \label{01}
Let $P$ be a strong $\mathcal{C}$-hyperideal of $H$. If $P$ is a weakly sdf-absorbing hyperideal of $H$ but is not an sdf-absorbing hyperideal, then $P \subseteq \Upsilon$.
\end{theorem}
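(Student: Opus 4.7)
The hypothesis unpacks to: there exist $0\neq x,y\in H$ with $x^2-y^2\subseteq P$ and $0\in x^2-y^2$, yet $x-y\notin P$ and $x+y\notin P$. The goal is to show every $p\in P$ satisfies $0\in p^n$ for some $n\in\mathbb{N}$. The plan follows the classical ring-theoretic template that produces $p^3\ni 0$ by perturbing the witness pair $(x,y)$ by elements of $P$.

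First, for a fixed $p\in P$ with $x+p\neq 0$ and $y+p\neq 0$, I would establish the four memberships
\begin{equation*}
0\in (x+p)^2-y^2,\qquad 0\in x^2-(y+p)^2,\qquad 0\in (x+p)^2-(y+p)^2,\qquad 0\in (x+p)^2-(y-p)^2.
\end{equation*}
Each difference of squares lies in $P$: since $p\in P$ we have $x\circ p,\,y\circ p,\,p^2\subseteq P$, and the strong $\mathcal{C}$-hyperideal hypothesis on $P$ ensures the sums of such products remain inside $P$. If $0$ were absent from any of the four sets, weakly sdf-absorbing applied to the corresponding nonzero pair would place a $P$-shifted version of $x\pm y$ inside $P$; absorbing the shift would yield $x-y\in P$ or $x+y\in P$, contradicting the choice of $(x,y)$.

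Next, I would extract hyperring analogues of the classical identities $2xp+p^2=0$ and $2yp+p^2=0$. Using the inclusion $(x+p)^2\subseteq x^2+2(x\circ p)+p^2$ and pairing the witness to $0\in (x+p)^2-y^2$ with the witness to $0\in x^2-y^2$, I expect to obtain $0\in 2(x\circ p)+p^2$, and symmetrically $0\in 2(y\circ p)+p^2$. Substituting $p+q$ for $p$ in the first relation, expanding by distributivity, and cancelling the pieces depending only on $p$ or only on $q$ produces $0\in 2(p\circ q)$ for all $p,q\in P$; in particular $0\in 2p^2$. Multiplying $0\in 2(x\circ p)+p^2$ through by $p$ yields $0\in 2(x\circ p^2)+p^3$, and lifting $0\in 2p^2$ through the element $x$ gives $0\in 2(x\circ p^2)$, forcing $0\in p^3$. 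Thus $p\in\Upsilon$.

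The main obstacle is the passage from the set membership $0\in (x+p)^2-y^2$ to the clean identity $0\in 2(x\circ p)+p^2$. In a ring this step is trivial once $x^2=y^2$, but in a hyperring $x^2$ and $y^2$ are sets, the distributive law for $(x+p)^2$ is only an inclusion, and the representatives that realize $0\in (x+p)^2-y^2$ need not coincide with those realizing $0\in x^2-y^2$. Bridging this calls for a careful selection of representatives together with repeated use of the strong $\mathcal{C}$-hyperideal hypothesis, which is precisely what lets sums of products of $P$-elements move in and out of $P$ cleanly. A handful of degenerate cases (when $x+p=0$, $y+p=0$, or $y-p=0$) place one of $x,y$ inside $P$; combined with $0\in x^2\cap y^2$ and the strong $\mathcal{C}$-hyperideal property these cases force $y^2\subseteq P$, and the required nilpotency of $p$ is then extracted by the radical argument of Theorem~\ref{1}.
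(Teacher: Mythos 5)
Your overall strategy --- fix a witness pair $(x,y)$ with $0\in x^2-y^2\subseteq P$ and $x-y,\,x+y\notin P$, perturb it by an element of $P$, and use the weakly sdf-absorbing hypothesis to force $0$ into every perturbed difference of squares --- is the same one the paper uses, and that first stage of your argument is sound. The gap is in the second stage, which you yourself flag as ``the main obstacle'' and then do not resolve: the passage from $0\in (x+p)^2-y^2$ and $0\in x^2-y^2$ to $0\in 2(x\circ p)+p^2$. In a hyperring this inference has the shape ``$0\in A+B$ and $0\in A$, hence $0\in B$,'' which is not valid: the sets $x^2-x^2$ and $y^2-y^2$ contain $0$ but are typically not equal to $\{0\}$, so the representatives realizing the two memberships need not cancel against each other. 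The same unjustified cancellation is then repeated several more times (to reach $0\in 2(y\circ p)+p^2$, then $0\in 2(p\circ q)$, then $0\in p^3$), so the entire chain after the four memberships is asserted rather than proved. Your degenerate-case discussion also invokes $0\in x^2\cap y^2$, which was never established --- only $0\in x^2-y^2$ is known.

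For comparison, the paper takes a shorter route that sidesteps both difficulties. It first observes that $x$ and $y$ cannot both lie in $P$ (else $x\pm y\in P$), so it may assume $y\notin P$; then for $a\in P$ it perturbs only $y$, to $y+a$ and $y-a$. Since $y\notin P$ while $\pm a\in P$, these perturbed elements are automatically nonzero, so no degenerate cases arise at all. The weakly sdf-absorbing hypothesis forces $0\in x^2-(y+a)^2$ and $0\in x^2-(y-a)^2$, and combining these two symmetric memberships (with the strong $\mathcal{C}$-hyperideal hypothesis) yields $0\in a^2$ directly, rather than the longer $0\in p^3$ chain. If you want to complete your version, the representative-matching argument you deferred is exactly the work that must be done; the paper's symmetric choice of perturbations $y\pm a$ of the element known to lie outside $P$ is what makes that combination manageable.
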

\begin{proof}
Assume that $P$ is not an sdf-absorbing hyperideal of $H$. Then we get  $0 \in x^2-y^2 \cap P$ for some $0 \neq x,y \in H$ such that neither $x-y$ nor $x+y$ are in $P$. If both $x$ and  $y $ are in $P$, we get $x-y,x+y \in P$ which is implossible. Let us assume that $y \notin P$. Suppose that $a \in P$. It is sufficient to show that $a$ is a nilpotent element of $H$. Since $y \notin P$, $y-a, y+a \neq 0$. From  $x^2-y^2 \subseteq P$, it follows that $x^2-(y+a)^2 \subseteq x^2-y^2-y \circ a -y \circ a -a^2 \subseteq P$ and $x^2-(y-a)^2 \subseteq x^2-y^2+y \circ a +y \circ a -a^2 \subseteq P$. If $0 \notin x^2-(y+a)^2$, then we obtain $x-y+a \in P$ or $x+y+a \in P$ as $P$ is a weakly sdf-absorbing hyperideal of $H$. This implies that $x-y \in P$ or $x+y \in P$ which is impossible.  Similarly, if $0 \notin x^2-(y-a)^2$, then we get a contradiction. Therefore we have $0 \in  x^2-(y+a)^2$ and $0 \in  x^2-(y-a)^2$. So $0 \in  x^2-(y+a)^2 \subseteq x^2-y^2-y \circ a -y \circ a -a^2$ and $0 \in  x^2-(y-a)^2 \subseteq x^2-y^2+y \circ a +y \circ a -a^2$. Then we conclude that $0 \in -y \circ a -y \circ a -a^2 +y \circ a +y \circ a -a^2$. Since $0 \in -y \circ a +y \circ a $, $0 \in 0+0-a^2$ and so $0 \in a^2$. This means $a \in \Upsilon$ which implies $P \subseteq \Upsilon$.
\end{proof}
We say that a proper hyperideal $P$ of $H$ is a weakly prime hyperideal if $0 \notin x \circ y \subseteq P$ for $x,y \in H$ imlpies $x \in P$ or $y \in P$.
\begin{theorem}\label{02}
Assume that $P$ is a nonzero sdf-absorbing strong $\mathcal{C}$-hyperideal of $H$ such that $1+1 \in U(H)$. Then $P$ is a prime hyperideal of $H$.
\end{theorem}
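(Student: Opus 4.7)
The plan is to mirror the argument of Theorem \ref{12}, which this statement restates verbatim. I would begin by assuming $a \circ b \subseteq P$ for some $0 \neq a, b \in H$ and aim to force $a \in P$ or $b \in P$. The argument splits on whether $b \in \{a, -a\}$ or not.

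In the degenerate case $b = \pm a$, one immediately gets $a^2 \subseteq P$ (since a hyperideal is closed under negation, $-a \circ a \subseteq P$ collapses to the same statement). Theorem \ref{1} then gives $\mathrm{rad}(P) = P$, and hence $a \in P$, which closes this case.

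For the main case $b \neq \pm a$, I would introduce the ``half sum'' and ``half difference'': using the hypothesis $1+1 \in U(H)$, fix representatives $x \in a \circ (1+1)^{-1} + b \circ (1+1)^{-1}$ and $y \in a \circ (1+1)^{-1} - b \circ (1+1)^{-1}$. Expanding $x^2$ and $y^2$ via the distributive inclusion, the $a^2 \circ ((1+1)^{-1})^2$ and $b^2 \circ ((1+1)^{-1})^2$ pieces cancel pairwise, leaving $x^2 - y^2$ inside a fourfold sum of copies of $a \circ b \circ ((1+1)^{-1})^2$; via $1 \in (1+1) \circ (1+1)^{-1}$ together with the strong $\mathcal{C}$-hyperideal hypothesis, this collapses to a subset of $a \circ b \subseteq P$. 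The sdf-absorbing hypothesis then yields $x - y \in P$ or $x + y \in P$. In the first case, $x - y$ lies in $b \circ (1+1)^{-1} + b \circ (1+1)^{-1}$, and the same $(1+1)(1+1)^{-1}$ manipulation extracts $b \in P$ through the chain $b \in b \circ 1 \subseteq b \circ (1+1)^{-1} \circ (1+1) \subseteq b \circ (1+1)^{-1} + b \circ (1+1)^{-1} \subseteq P$; the second case yields $a \in P$ symmetrically.

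The main obstacle is not conceptual but bookkeeping: in a multiplicative hyperring the distributive law holds only as a one-sided inclusion $x \circ (y+z) \subseteq x \circ y + x \circ z$, so every step above is a subset relation in a single direction. I would therefore need to invoke the strong $\mathcal{C}$-hyperideal hypothesis repeatedly, both to promote the computation ``some element of $x^2 - y^2$ lies in $P$'' to the full containment $x^2 - y^2 \subseteq P$, and at the end to pass from $b \circ (1+1)^{-1} + b \circ (1+1)^{-1} \subseteq P$ to the element-level conclusion $b \in P$. Everything else is routine set-valued arithmetic.
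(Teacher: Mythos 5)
Your proposal is correct and follows essentially the same route as the paper, which itself proves this statement only by deferring to the proof of Theorem \ref{12}: the case split on $b=\pm a$, the half-sum/half-difference elements built from $(1+1)^{-1}$, the expansion of $x^2-y^2$ into $P$, and the repeated use of the strong $\mathcal{C}$-hyperideal hypothesis to upgrade one-sided distributive inclusions all match the paper's argument. No further comparison is needed.
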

\begin{proof}
It can be easily seen that the claim is true in a similar manner to the proof of Theorem \ref{12}.
\end{proof}
\begin{theorem} \label{03}
Let $(H_1,+_1,\circ_1)$ and $(H_2,+_2,\circ_2)$  be commutative multiplicative hyperrings and $P_1$ be a nonzero weakly sdf-absorbing strong $\mathcal{C}$-hyperideal of $H_1$.  Then  the followings are equivalent. 
\begin{itemize} 
\item[\rm(i)]~$P_1$ is an sdf-absorbing hyperideal of $H_1$.
\item[\rm(ii)]~$P_1 \times H_2$ is an sdf-absorbing hyperideal of $H_1 \times H_2$.
\item[\rm(iii)]~$P_1 \times H_2$ is a weakly  sdf-absorbing hyperideal of $H_1 \times H_2$. 
\end{itemize} 
\end{theorem}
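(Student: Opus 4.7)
The proof splits into three implications. The equivalence (i) $\Leftrightarrow$ (ii) is immediate from Theorem \ref{14}, which already characterizes sdf-absorbing hyperideals of the form $P_1 \times H_2$ precisely as those arising from an sdf-absorbing $P_1$. The implication (ii) $\Rightarrow$ (iii) is a direct consequence of the definitions, since every sdf-absorbing hyperideal is automatically weakly sdf-absorbing: the latter demands the same conclusion under a strictly stronger hypothesis (namely $0 \notin x^2 - y^2$ in addition to $x^2 - y^2 \subseteq P$).

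The substantive direction is (iii) $\Rightarrow$ (i). The plan is to assume $P_1 \times H_2$ is weakly sdf-absorbing and verify the sdf-absorbing property of $P_1$ directly. Given $0 \neq x_1, y_1 \in H_1$ with $x_1^2 - y_1^2 \subseteq P_1$, I first dispose of the easy subcase $0 \notin x_1^2 - y_1^2$: here the standing hypothesis that $P_1$ is weakly sdf-absorbing applies and yields $x_1 - y_1 \in P_1$ or $x_1 + y_1 \in P_1$.

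For the remaining subcase $0 \in x_1^2 - y_1^2$, I lift the situation to $H_1 \times H_2$ by considering the pair $(x_1, 1_{H_2})$ and $(y_1, 0)$, both nonzero. A short computation shows
\[
(x_1, 1_{H_2})^2 - (y_1, 0)^2 \subseteq (x_1^2 - y_1^2) \times 1_{H_2}^2 \subseteq P_1 \times H_2.
\]
Provided $(0, 0)$ is excluded from this set, the weakly sdf-absorbing property of $P_1 \times H_2$ forces either $(x_1 - y_1, 1_{H_2}) \in P_1 \times H_2$ or $(x_1 + y_1, 1_{H_2}) \in P_1 \times H_2$, and either alternative immediately gives $x_1 - y_1 \in P_1$ or $x_1 + y_1 \in P_1$.

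The main obstacle is justifying that $(0, 0) \notin (x_1^2 - y_1^2) \times 1_{H_2}^2$, which reduces to checking $0 \notin 1_{H_2}^2$. In the usual setting of a hyperring with a nondegenerate identity this is transparent, but in hyperrings where $0 \in 1_{H_2}^2$ (such phenomena do occur, as the first example in the paper demonstrates) one must replace $1_{H_2}$ with a suitable element $v \in H_2$ satisfying $0 \notin v^2$, or else appeal to Theorem \ref{01}, which forces $P_1 \subseteq \Upsilon$ whenever $P_1$ is weakly but not sdf-absorbing. Combined with the hypothesis that $P_1$ is nonzero and the fact that $1_{H_1} \notin P_1$, this should give a contradiction, completing (iii) $\Rightarrow$ (i) in the pathological case as well.
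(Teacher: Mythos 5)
Your overall route coincides with the paper's: (i)$\Leftrightarrow$(ii) is quoted from Theorem \ref{14}, (ii)$\Rightarrow$(iii) is definitional, and for (iii)$\Rightarrow$(i) the paper uses exactly your lift, taking $a=(x,1_{H_2})$ and $b=(y,0)$ for a witnessing failure with $0\in x^2-y^2$ and invoking the weakly sdf-absorbing property of $P_1\times H_2$. The obstacle you single out is genuine: $(0,0)\notin a^2-b^2$ amounts (in the case $0\in x^2-y^2$) to $0\notin 1_{H_2}^2-0^2$, and since $0\in 1_{H_2}\circ 1_{H_2}$ is perfectly possible (the paper's own $\mathbb{Z}_4$ example has $1\circ 1=\mathbb{Z}_4\ni 0$), this step can fail. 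The paper simply asserts $(0,0)\notin a^2-b^2$ with no justification, so it contains the same gap you noticed; you deserve credit for flagging it.

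However, neither of your proposed repairs closes the gap. Replacing $1_{H_2}$ by some $v\in H_2$ with $0\notin v^2$ only works if such a $v$ exists, and nothing in the hypotheses rules out a hyperring $H_2$ in which $0\in v^2$ for every $v$. Your fallback via Theorem \ref{01} also does not produce a contradiction: that theorem gives only $P_1\subseteq\Upsilon$, and a nonzero hyperideal consisting entirely of nilpotent elements is entirely consistent, so ``$P_1$ nonzero'' and ``$1_{H_1}\notin P_1$'' contradict nothing. As written, then, the implication (iii)$\Rightarrow$(i) is established only under the additional assumption $0\notin 1_{H_2}^2$ (or more generally the existence of $u,w\in H_2$ with $0\notin u^2-w^2$); in the remaining case your argument, like the paper's, is incomplete and would need either a different construction or an added hypothesis on $H_2$.
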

\begin{proof}
(i) $\Longrightarrow $ (ii) The claim follows from Theorem \ref{14}. 

(ii) $\Longrightarrow $ (iii) It is obvious.

(iii) $\Longrightarrow $ (i) Suppose that $P_1 \times H_2$ is a weakly  sdf-absorbing hyperideal of $H_1 \times H_2$ but $P_1$ is not an sdf-absorbing hyperideal of $H_1$. Then there exist $0 \neq x,y \in H_1$ with $0 \in x^2-y^2 \cap P_1$ but $x-y \notin P_1$ and $x+y \notin P_1$. Put $a=(x,1_{H_2})$ and $b=(y,0)$. So we have $(0,0) \notin a^2-b^2 \subseteq P_1 \times H_2$ and $0 \neq a,b \in H_1$. By the hypothesis, we obtain $(x-y,1_{H_2})=a-b \in P_1 \times H_2$ or $(x+y,1_{H_2})=a+b \in P_1 \times H_2$. This means that $x-y \in P_1$ or $x+y \in P_1$. This is impossible. Thus  $P_1$ is an sdf-absorbing hyperideal of $H_1$.
\end{proof}
\begin{theorem} \label{04}
Let   $P_1$ be a  weakly sdf-absorbing strong $\mathcal{C}$-hyperideal of $(H_1,+_1,\circ_1)$ but is not  an sdf-absorbing hyperideal and $P_2$ be a  weakly sdf-absorbing strong $\mathcal{C}$-hyperideal of $(H_2,+_2,\circ_2)$ but is not  an sdf-absorbing hyperideal.  Then  the followings are equivalent. 
\begin{itemize} 
\item[\rm(i)]~$P_1 \times P_2$ is a weakly  sdf-absorbing hyperideal of $H_1 \times H_2$ but is not an sdf-hyperideal hyperideal.
\item[\rm(ii)]~$P_1 \times P_2$ is a weakly  sdf-absorbing hyperideal of $H_1 \times H_2$.
\item[\rm(iii)]~ If $x^2-y^2 \subseteq P_1$ for $x,y \in H_1$, then $0 \in x^2-y^2$, and if $u^2-v^2 \subseteq P_2$ for $u,v \in H_2$, then $0 \in u^2-v^2$.
\item[\rm(iv)]~ If $a^2-b^2 \subseteq  P_1 \times P_2$ for $(0,0) \neq a,b \in H_1 \times H_2$, then $(0,0) \in a^2-b^2$.
\end{itemize} 
\end{theorem}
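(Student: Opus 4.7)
The plan is to prove the four conditions equivalent via the cycle (i)$\Rightarrow$(ii)$\Rightarrow$(iii)$\Rightarrow$(iv)$\Rightarrow$(i), exploiting throughout the identity $(x,u)^2 - (y,v)^2 = (x^2-y^2)\times(u^2-v^2)$ in $H_1\times H_2$, together with witnesses obtained from the failure of the sdf-absorbing property in each factor. Concretely, since $P_1$ is weakly sdf-absorbing but not sdf-absorbing, I would fix once and for all $0\neq x_0,y_0\in H_1$ with $x_0^2-y_0^2\subseteq P_1$ and $x_0\pm y_0\notin P_1$; automatically $0\in x_0^2-y_0^2$, for otherwise the weakly sdf-absorbing hypothesis would place at least one of $x_0\pm y_0$ in $P_1$. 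I would fix analogous witnesses $0\neq u_0,v_0\in H_2$ for $P_2$.

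The step (i)$\Rightarrow$(ii) is immediate by dropping the extra clause. For (ii)$\Rightarrow$(iii), I would argue the $P_1$-clause by contradiction: assume $x^2-y^2\subseteq P_1$ for some $x,y\in H_1$ with $0\notin x^2-y^2$, and set $a=(x,u_0)$, $b=(y,v_0)$. Because $u_0,v_0\neq 0$, both $a$ and $b$ are nonzero in $H_1\times H_2$; moreover
\[
a^2-b^2 \;=\; (x^2-y^2)\times(u_0^2-v_0^2) \;\subseteq\; P_1\times P_2,
\]
and $(0,0)\notin a^2-b^2$ because $0\notin x^2-y^2$. Applying (ii) forces $a-b\in P_1\times P_2$ or $a+b\in P_1\times P_2$, which on the second coordinate gives $u_0-v_0\in P_2$ or $u_0+v_0\in P_2$, contradicting the choice of $u_0,v_0$. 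The $P_2$-clause is symmetric using $x_0,y_0$.

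Step (iii)$\Rightarrow$(iv) is a direct projection: any $a=(x,u)$, $b=(y,v)$ with $a^2-b^2\subseteq P_1\times P_2$ satisfies $x^2-y^2\subseteq P_1$ and $u^2-v^2\subseteq P_2$, so (iii) gives $0\in x^2-y^2$ and $0\in u^2-v^2$, whence $(0,0)\in(x^2-y^2)\times(u^2-v^2)=a^2-b^2$. For (iv)$\Rightarrow$(i), condition (iv) renders the premise ``$(0,0)\notin a^2-b^2\subseteq P_1\times P_2$'' vacuous, so $P_1\times P_2$ is trivially weakly sdf-absorbing; and the pair $a=(x_0,u_0)$, $b=(y_0,v_0)$ witnesses the failure of the sdf-absorbing property, since $a,b\neq (0,0)$, $a^2-b^2\subseteq P_1\times P_2$, yet $a\pm b\notin P_1\times P_2$ already because $x_0\pm y_0\notin P_1$.

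The main obstacle will be (ii)$\Rightarrow$(iii): one must lift a putative first-factor counterexample $(x,y)$ to a pair of nonzero elements of $H_1\times H_2$ whose squared difference misses $(0,0)$ while still lying in $P_1\times P_2$, and this lifting is only possible because the auxiliary witnesses $u_0,v_0\in H_2$ (supplied by the hypothesis that $P_2$ is not sdf-absorbing) provide both the nonzero second coordinates and the final contradiction $u_0\pm v_0\notin P_2$. The remaining implications are essentially bookkeeping with the product decomposition of squares.
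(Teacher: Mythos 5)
Your proof is correct and follows essentially the same route as the paper: the same cycle (i)$\Rightarrow$(ii)$\Rightarrow$(iii)$\Rightarrow$(iv)$\Rightarrow$(i), with (ii)$\Rightarrow$(iii) lifting a first-factor counterexample using the witnesses supplied by the failure of sdf-absorption in the other factor, and (iv)$\Rightarrow$(i) noting vacuity of the weak condition plus an explicit product witness. Your choice of witness $a=(x_0,u_0)$, $b=(y_0,v_0)$ in the last step is in fact cleaner than the paper's $a=(x,0)$, $b=(0,y)$ (which appears to contain a typo), and your explicit remark that $u_0,v_0\neq 0$ guarantees nonzeroness of the lifted pair fills a detail the paper leaves implicit.
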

\begin{proof}
(i) $\Longrightarrow$ (ii) It is obvious.

(ii) $\Longrightarrow$ (iii) Suppose that $0 \notin x^2-y^2 \subseteq P_1$ for $x,y \in H_1$. Since $P_2$ is not  an sdf-absorbing hyperideal of $H_2$, we get $0 \in t^2-s^2 \cap P_2$ for some $0 \neq t,s \in H_1$ such that  neither $t-s \in P_2$ nor $t+s \in P_2$. Put $a=(x,t)$ and $b=(y,s)$. This implies that $(0,0)  \notin a^2-b^2=(x^2-y^2,t^2-s^2) \in P_1 \times P_2$ for $(0,0) \neq a,b \in H_1 \times H_2$. Since $P_1 \times P_2$ is a weakly  sdf-absorbing hyperideal of $H_1 \times H_2$, we conclude that $a-b=(x-y,t-s) \in P_1 \times P_2$ or $a+b=(x+y,t+s) \in P_1 \times P_2$. It follows that $t-s \in P_2$ or $t+s \in P_2$, which is impossible. Therefore  $0 \in x^2-y^2$. Now, let $u^2-v^2 \subseteq P_2$ for $u,v \in H_2$. Similarly, we can show that $0 \in u^2-v^2$.

(iii) $\Longrightarrow$ (iv) It is obvious. 

(iv) $\Longrightarrow$ (i)  It is clear that $P_1 \times P_2$ is a weakly  sdf-absorbing hyperideal of $H_1 \times H_2$. By the hypothesis, we get $0 \in x^2-y^2 \cap P_1$ for some $0 \neq x,y \in H_1$ such that neither $x-y \in P_1$ nor $x+y \in P_1$. Put $a=(x,0)$ and $b=(0,y)$. This implies that $(0,0) \in a^2-b^2 \cap  P_1 \times P_2$ such that neither $a-b=(x-y,0) \in P_1 \times P_2$ nor $a+b=(x+y,0) \in P_1 \times P_2$. Consequently, $P_1 \times P_2$ is not an sdf-hyperideal hyperideal.
\end{proof}


\end{document}